\definecolor{gray}{HTML}{d2d2d2}
\newtheorem{thm}{Theorem}
\newtheorem{df}[thm]{Definition}
\newtheorem{ex}[thm]{Example}
\newtheorem{lemma}[thm]{Lemma}
\newtheorem{prop}[thm]{Proposition}
\newtheorem{rem}[thm]{Remark}
\newcommand{\I}{\mathcal{I}}
\newcommand{\R}{\mathds{R}}
\newcommand{\de}{\partial}
\newcommand{\J}{\mathcal{J}}
\newcommand{\s}{\mathcal{S}}
\newcommand{\pk}{para-K\"ahler }
\newcommand{\ord}{\mathrm{ord}}
\newcommand{\Z}{\mathds{Z}}
\newcommand{\N}{\mathds{N}}
\newcommand{\D}{\mathds{D}}
\newcommand{\DP}{\mathds{D}\mathds{P}}
\newcommand{\dd}{\mathcal{D}}
\title{Para-K\"ahler immersions in \pk space forms} \author{Gianni Manno, Filippo Salis }
\begin{document}

\maketitle
%\tableofcontents

\begin{abstract}
In this paper, we provide necessary and sufficient conditions for the existence of \pk immersions in \pk space forms. As a consequence, we prove that, in general, a local \pk immersion cannot be globally extended, even if it is defined on a simply connected \pk manifold. Finally, we classify \pk immersions between \pk space forms.
\end{abstract}

\smallskip\noindent
\textbf{MSC 2020}: 53C15, 53C42.

\smallskip\noindent
\textbf{Keywords}: \pk immersion, \pk space forms, Calabi's diastasis, rigidity, global extendability

\section{Introduction}

\subsection{The context and description of the problem}
An \emph{almost para-complex manifold} is a $2n$-dimensional manifold $M$ provided with a field of endomorphisms $\mathcal{T}$ such that $\mathcal{T}^2=1$, having eigenvalues $1$ and $-1$, whose associated eigendistributions are $n$-dimensional. If, furthermore, such distributions are integrable, which is equivalent to requiring the vanishing of the Nijenhuis tensor of $\mathcal{T}$, an almost para-complex manifold is called a \emph{para-complex manifold}.  A \emph{\pk manifold} is a para-complex manifold with a pseudo-Riemannian metric $g$ against which the field of endomorphism $\mathcal{T}$ is parallel, or, equivalently, the $2$-form $\omega=g\left(\mathcal{T}(\cdot),\,\cdot\,\right)$ is symplectic: in this case, the aforementioned distributions give a couple of Lagrangian foliations. These are important aspects for which \pk geometry and, more in general, para-complex geometry is an area of research rather active,  as evidenced by lots of papers in this field. In this regard, the survey  \cite{gm}  shows a large spectrum of applications of this geometry, mainly focused on  actions of Lie groups on the aforementioned manifolds, together with a historical introduction. One can also consult  \cite{al,amt,amt2} for further study and their bibliography for further readings.
%where is taken into account the problem of classifying \pk manifolds and, more in general, para-complex manifold up to the action of a Lie group. 
For different applications, one can see \cite{hl} and references therein, where, among other things, it was shown a direct relation between Lagrangian submanifolds of \pk manifolds  and the Monge-Kantorovich mass transport problem.

The formal analogy with the K\"ahler geometry makes it possible to formulate problems also in the \pk context by translating the original ones. For instance, a classical  problem in K\"ahler geometry is the characterization of holomorphic and isometric immersion  into complex space forms, i.e. into   K\"ahler  manifolds of constant holomorphic sectional curvature (see \cite{Cal} and for a modern introduction to this subject \cite{loizedda}).
In particular, Calabi provided in \cite{Cal} a distinguished function, called \emph{diastasis}, whose Taylor's expansion facilitated the study of the aforementioned immersions, allowing the author to achieve some remarkable results. The same problem can be formulated in the \pk case, i.e., to characterize para-holomorphic and isometric immersions, called \emph{\pk immersions},  into \emph{\pk space forms}, i.e., \pk manifolds having constant para-holomorphic sectional curvature (namely the para-complex counterpart of the holomorphic sectional curvature).

Since analiticity of homolorphic functions plays a key role in the aforementioned Calabi's work, his techniques cannot be applied in the considered context. 
Indeed, even though its formal similarity with complex geometry, para-complex geometry is quite different, since para-holomorphic functions are not, in general, analytic but only $\mathcal{C}^\infty$-smooth. 
Therefore, even if inspired by the Calabi's work,  in order to achieve  necessary and sufficient conditions for the existence of local \pk immersions into \pk space forms,  we need to develop some different ideas, which we will explain more in details in the next section.

%
%
%
%\textbf{ "A rather surprising amount of complex geometry carries over, almost untouched, and this has been the subject of many papers." Noi possiamo usare questa frase per dire che i risultati di Calabi non seguono per niente in quanto l'analiticita' e' cruciale, anzi, alcuni non sono neanche veri nal caso para}
%
%%In the para-complex case, being the aforementioned distributions $n$-dimensional, with trivial intersection and integrable, they give rise to a product structure on the manifold $M$: in the \pk case, the leaves of these distributions are Lagrangian submanifolds, leading to the so-called  \emph{bi-Lagrangian} decomposition \cite{amt} \textbf{CITARE ALTRI LAVORI}. Indeed, one of the important aspects of the \pk geometry is that it is a natural setting for studying calibrations in semi-Riemannian geometry, where Special Lagrangian submanifolds (that are particular Lagrangian submanifolds of the symplectic structure of a \pk manifold) play a central role \cite{hl}.

\subsection{Description of the paper and main results}

In Section \ref{basics}, after recalling some fundamental aspects of para-complex geometry, we introduce the diastasis function for \pk manifolds inspired by a Calabi's idea. We end the section with the description of \pk manifolds of constant para-holomorphic curvature.

Section \ref{fullpk} is mainly devoted to find necessary and sufficient conditions for the existence of  \pk immersions into \pk space forms. In particular, the main achievement of the Section \ref{rig} is Theorem \ref{rigidity} where, essentially, we prove that, if it exists, a \pk immersion is locally unique up an isometry of the ambient space. The example of \pk manifold described in Section \ref{counterex} helps to better illustrate  such local character of \pk immersions. Moreover, differently from the  K\"ahler case, this example shows that, in general, local \pk immersions into \pk space forms and defined on a simply connected manifold, cannot  be globally extended. In Section \ref{pk immersion}, taking into account the results obtained so far, we  arrive to the main achievements of the paper, i.e. Theorem \ref{main}. More precisely, we found algebraic necessary and sufficient conditions for the existence of \pk immersions in \pk space forms. Such conditions are expressed in terms of a suitable constructed function and its derivatives and, being algebraic, they  can  be straightforwardly computed.

Finally, in Section \ref{pkspforms}, by means of the results  obtained in the previous sections, we get a complete classification of \pk immersions between \pk space forms.

\subsection*{Notation}
A multi-index $I=(i_1,\dots,i_n)$ is an element of $\N^n$ and its length $|I|$ is defined as the number $|I|:=\sum_{k=1}^ni_k$.  If $(x_1,\dots,x_n)$ are local coordinates, we define the
derivative operators $\frac{\de^{|I|}}{\de x^I}$ as follows:
$$
\frac{\de^{|I|}}{\de x^I}:=\frac{\de^{|I|}}{\de x_1^{i_1}\cdots \de x_n^{i_n}}\,.
$$
We fix a total order on $\N^n$ such that $I_0=0\in\N^n$ and  $|I_i|\leq|I_{i+1}|$ for any $i\in\N$. This obviously induces a total order on the set of the aforementioned derivative operators $\frac{\de^{|I|}}{\de x^I}$. Once a total order on $\N^n$ is fixed, one can construct a bijection  $\iota$ between any subset $\I\subset\N^n$ with finite cardinality and $\{1,\mathellipsis, \#\I\}\subset\N$ preserving the total orders:
\begin{equation}\label{iota}
\iota :I\in\I\longrightarrow \iota(I)\in\{1,\mathellipsis, \#\I\}.
\end{equation}

\section{Basics of para-complex geometry}\label{basics}
\subsection{Para-holomorphic  functions}

The 2-dimensional algebra over $\R$ of \emph{para-complex numbers} $\D$ is generated by $1$ and $\tau$, where 
$$\tau^2=1\,.
$$
In  analogy with the complex numbers, we are going to adopt the notation used in \cite{hl}: each $z\in\D$ can be written as 
$$
z=x+\tau y\,,
$$
and we are going to refer to $x$ and $y$ as the \emph{real} and \emph{imaginary part} of $z$, respectively.
In analogy with the complex numbers, we define the conjugate of $z$
$$ \bar z=x-\tau y$$
and 
$$|z|^2=z\bar z=x^2-y^2.$$
For later purposes, it is useful to introduce also another coordinate system on $\D$, described as follows. We switch the basis $(1,\tau)$ with $(e,\bar e)$, where
$$
e=\frac{1}{2}\left(1-\tau\right)\,,\qquad \bar e=\frac{1}{2}\left(1+\tau\right)\,,
$$
and we are going to say that $(u,v)$ are \emph{null-coordinates} of $z$ if  $z=ue+v\bar e$.

Now we can translate on $\D^n$  what we said about $\D$. In particular, for any $z,w\in\D^n$, we define 
$$\langle z, w\rangle= \sum_{i=1}^n z_i\bar w_i$$
and
$$\|z\|^2:=\sum_{i=1}^n |z_i|^2.$$

\begin{df}
A  function
$$
\begin{array}{rrcl}
F:&U\subseteq \D^n&\longrightarrow&\D\\
& (z_1,\mathellipsis,z_n)&\longmapsto& g(x_1,y_1,\mathellipsis,x_n,y_n)+ \tau\ h(x_1,y_1,\mathellipsis,x_n,y_n),
\end{array}$$ 
where $z_i=x_i+\tau y_i$, is called \emph{para-holomorphic} if and only if $g$ and $h$ are smooth and
\begin{equation}\label{dezbar}
\frac{\de F}{\de\bar z_i}:=\frac{1}{2}\left( \frac{\de g}{\de x_i}-\frac{\de h}{\de y_i}\right)+\frac{\tau}{2}\left( \frac{\de h}{\de x_i}-\frac{\de g}{\de y_i}\right)=0\end{equation}
for any $1\leq i\leq n$.
\end{df}
The number $n$ stands for the \emph{``para-complex" dimension} of $\D^n$.

In analogy to the complex setting, the differential operator $\frac{\de}{\de z_i}$ is defined by
\begin{equation}\label{dez}
\frac{\de F}{\de z_i}:=\overline{\left( \frac{\de \bar F}{\de \bar z_i}\right)}.
\end{equation}
\begin{rem}\label{ph1}
Let  $F:U\subseteq \D^n\to\D$ be a para-holomorphic function. By considering the null-coordinates $(\xi,\eta)=(\xi_1,\mathellipsis,\xi_n,\eta_1,\mathellipsis,\eta_n)$ on $\D^n$ and by writing $F$ as
$$
F(\xi_1e+\eta_1\bar e,\mathellipsis,\xi_n e+\eta_n\bar e)=u(\xi_1,\eta_1,\mathellipsis,\xi_n ,\eta_n)\ e+ v(\xi_1,\eta_1,\mathellipsis,\xi_n ,\eta_n)\ \bar e\,,
$$ 
where $u$ and $v$ are real functions on an open subset of $\R^{2n}$, we straightforwardly get
$$
\frac{\de F}{\de\bar z_i}=\frac{\de u}{\de \eta_i}e+\frac{\de v}{\de \xi_i}\bar e\,.
$$
Therefore, $F$ is para-holomorphic if and only if $u$ is independent of $(\eta_1,\mathellipsis,\eta_n)$  and $v$ is independent of  $(\xi_1,\mathellipsis,\xi_n)$.
\end{rem}
\begin{df}\label{ph2}
A function 
$$
\begin{array}{rccc}
F:&U\subseteq \D^n&\longrightarrow&\D^m\\
& z=(z_1,\mathellipsis,z_n)&\longmapsto&\big(\, f_1(z),\mathellipsis,f_m(z)\,\big)
\end{array}
$$ 
is para-holomorphic if and only if each component $f_i$ is para-holomorphic.
\end{df}

\subsection{Para-K\"ahler  manifolds, diastasis and \pk space forms}\label{sec.pk.diastasis}

The condition of integrability of almost para-complex structures, that we wrote in the Introduction, is equivalent to the existence of a para-holomorphic atlas. More precisely, we give the following definition.
\begin{df}
A smooth manifold $M^n$ of para-complex dimension $n$ is called \emph{para-complex} if it admits an atlas of para-holomorphic coordinates $(z_1,\mathellipsis,z_n)$, such that the transition functions are para-holomorphic.
\end{df}

Differently from what we did in the Introduction,  for our purposes, below we define \pk manifolds by introducing local potentials.

\begin{df}\label{pk}
A \emph{\pk manifold}  of para-complex dimension $n$ is a para-complex manifold $M$ endowed with a symplectic form $\omega$ (called \emph{\pk form}) such that, for any point $p\in M$, there exists an open neighborhood $U\ni p$ and  a smooth function $\Phi:U\to \R$ (called \emph{\pk potential}) satisfying
$$\omega|_{U}= \frac{\tau}{2}\de\bar\de\Phi:=\frac{\tau}{2}\sum_{i,j=1}^n\frac{\de^2\Phi}{\de z_i\de \bar z_j}dz_i\wedge d\bar z_j.$$ 
\end{df}
\begin{rem}
To a given \pk manifold $(M ,\omega)$  of para-complex dimension $n$ it is associated a pseudo-Riemannian  metric $g$ on $M$. Indeed, if $\omega$ admits a \pk potential $\Phi$ in an open set $U\subset M$, the restriction of $g$ to $U$ can be defined as the real part of 
$$\sum_{i,j=1}^n\frac{\de^2\Phi}{\de z_i\de \bar z_j}dz_i\otimes d\bar z_j.$$ 
\end{rem}
Let  $U$ be an open subset of a \pk manifold  $(M,\omega)$,  where it is defined a local potential $\Phi$. We  assume that $U$ can be covered by a system of null-coordinates 
$$(\xi,\eta)=(\xi_1,\mathellipsis,\xi_n,\eta_1,\mathellipsis,\eta_n).$$
 Moreover, up to shrinking $U$, we can also assume that it splits as a product 
$$U=\Omega\times \Omega.$$  
 We define the \emph{diastasis function}
$$D:U\times U=\Omega\times\Omega\times\Omega\times\Omega\to\R$$
as
\begin{equation}\label{diast}
D\left(\xi,\eta,\zeta,\lambda\right)=\Phi(\xi,\eta)-\Phi(\zeta,\eta)-\Phi(\xi,\lambda)+\Phi(\zeta,\lambda).
\end{equation}
Nevertheless, even if the previous definition is given in local coordinates, it gives rise to a well defined function on a neighborhood of the diagonal of $M\times M$. More precisely, we have the following proposition.
\begin{prop}\label{uniqueness}
The diastasis function is a function defined in a neighborhood of the diagonal of the product manifold $M\times M$. In particular, it does not depend on the choice of the \pk potential.
\end{prop}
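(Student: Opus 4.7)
The plan is to establish the proposition by verifying two invariances of the right-hand side of \eqref{diast}: independence of the choice of \pk potential within a fixed null-coordinate chart, and invariance under change of null coordinates. Together these imply that the locally defined values of $D$ coincide on overlaps, so that \eqref{diast} defines a single function on a neighborhood of the diagonal of $M\times M$.

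For the first invariance, I would begin by translating Definition \ref{pk} into null coordinates. Using the expressions $\partial_{z_i}=\bar e\,\partial_{\xi_i}+e\,\partial_{\eta_i}$ and $\partial_{\bar z_i}=e\,\partial_{\xi_i}+\bar e\,\partial_{\eta_i}$, together with the identities $e^2=e$, $\bar e^2=\bar e$, $e\bar e=0$, one checks that the coefficients of $\omega$ in null coordinates depend only on the mixed derivatives $\partial_{\xi_i}\partial_{\eta_j}\Phi$. Hence any two \pk potentials $\Phi$ and $\Phi'$ inducing the same $\omega$ satisfy $\partial_{\xi_i}\partial_{\eta_j}(\Phi'-\Phi)=0$ for all $i,j$, and a standard integration argument gives $\Phi'-\Phi=A(\xi)+B(\eta)$ for smooth $A$ and $B$. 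Substituting into \eqref{diast}, these contributions cancel, since $A(\xi)-A(\zeta)-A(\xi)+A(\zeta)=0$ and similarly for $B$.

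For the second invariance, I would invoke Remark \ref{ph1}: any para-holomorphic change of coordinates has, in null coordinates, the product form $\tilde\xi=f(\xi)$, $\tilde\eta=g(\eta)$. Under such a transformation the pulled-back potential $\tilde\Phi(\tilde\xi,\tilde\eta):=\Phi(\xi,\eta)$ is again a \pk potential, and each of the four arguments in \eqref{diast} is transported consistently; for instance the point with null coordinates $(\zeta,\eta)$ has new coordinates $(f(\zeta),g(\eta))=(\tilde\zeta,\tilde\eta)$, so that $\tilde\Phi(\tilde\zeta,\tilde\eta)=\Phi(\zeta,\eta)$, and analogously for $(\xi,\lambda)$. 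The right-hand side of \eqref{diast} is therefore literally unchanged under the coordinate change.

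The only nonroutine step is the passage from $\partial_{\xi_i}\partial_{\eta_j}\Psi\equiv 0$ to $\Psi(\xi,\eta)=A(\xi)+B(\eta)$, which is a standard calculus fact and presents no real obstacle. The remainder is algebraic bookkeeping in \eqref{diast}; the product structure $U=\Omega\times\Omega$ is exactly what guarantees that the four mixed arguments $(\xi,\eta),(\zeta,\eta),(\xi,\lambda),(\zeta,\lambda)$ all lie in the chart, and patching across overlapping charts near the diagonal then follows at once from the two invariances above.
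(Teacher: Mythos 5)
Your proof is correct and its core step --- showing that two potentials on the same null-coordinate chart satisfy $\partial_{\xi_i}\partial_{\eta_j}(\tilde\Phi-\Phi)=0$, hence differ by $F(\xi)+G(\eta)$, and that such terms cancel in \eqref{diast} --- is exactly the paper's argument. The additional verification of invariance under para-holomorphic changes of null coordinates (via the product form $\tilde\xi=f(\xi)$, $\tilde\eta=g(\eta)$ from Remark \ref{ph1}) is a welcome extra check that the paper leaves implicit, but it does not constitute a different method.
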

\begin{proof}
Let $(p,p)\in M\times M$.
Let $\Phi$ and $\tilde \Phi$ be two local potentials defined on the same open subset $U\ni p$ of a \pk manifold. By definition of potential (see Definition \ref{pk}) 
 we have
$$\de\bar\de(\Phi-\tilde\Phi)=0,$$
namely, if we fix some null-coordinates $(\xi,\eta)$ on $U$, 
$$\frac{\de^2(\Phi-\tilde\Phi)}{\de\xi_i\de\eta_j}=0,\qquad \forall 1\leq i,j\leq n.$$
Hence, there exist two functions $F,\ G\in\mathcal{C}^\infty(\Omega)$ such that 
$$\tilde \Phi (\xi,\eta)=\Phi(\xi,\eta)+F(\xi)+G(\eta).$$
Our statement  follows by the definition of diastasis function, see \eqref{diast}.
\end{proof}
\begin{rem}\label{D0}
Let $(M,\omega)$ be a \pk manifold with local potential $\Phi$, defined on an open subset $U\subseteq M$. Then the function 
$$\begin{array}{rccc}
D_{p}: & U&\longrightarrow & \R\\
&q&\longmapsto&D(p,q)
\end{array}$$ is also a local potential for $(M,\omega)$.
\end{rem}

In complete analogy with the case of K\"ahler manifolds, one can define the para-holomorphic sectional curvature (see for instance \cite{gm}) and, so, the \pk space forms.
\begin{df}
A \emph{\pk space form} is a \pk\ manifold with constant para-holomorphic sectional curvature. 
We denote with $\s_c^N$ an $N$-dimensional simply connected para-complex manifold that can be endowed  with a \pk form $\omega_c$ whose associated pseudo-Riemannian metric $g_c$ is complete and has constant para-holomorphic sectional curvature equal to $c$.
\end{df}

\begin{prop}[\cite{gm} Prop. 3.11]
Two complete and simply connected \pk space forms with the same para-holomorphic sectional curvature are para-holomorphically isometric. 
%Furthermore, any complete \pk space form is the quotient of a complete simply connected \pk space form  by a discrete group of para-holomorphic isometries acting properly discontinuously.
\end{prop}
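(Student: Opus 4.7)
The plan is to reduce the statement to a Cartan--Ambrose--Hicks-type comparison in the pseudo-Riemannian setting, exploiting the fact that the full Riemann curvature of a \pk manifold of constant para-holomorphic sectional curvature $c$ is algebraically determined by $c$, the metric $g$, and the para-complex structure $\mathcal{T}$.

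The first step is to establish this algebraic form of the curvature. Mirroring the K\"ahler case, I would combine the first Bianchi identity with the identity $R(X,Y)\mathcal{T}Z=\mathcal{T}R(X,Y)Z$ (which follows from parallelism of $\mathcal{T}$) and polarise the relation $g(R(X,\mathcal{T}X)\mathcal{T}X,X)=c\,g(X,X)^{2}$ to obtain a universal formula of the shape
$$
R(X,Y)Z=\tfrac{c}{4}\bigl[g(Y,Z)X-g(X,Z)Y+g(\mathcal{T}Y,Z)\mathcal{T}X-g(\mathcal{T}X,Z)\mathcal{T}Y-2\,g(\mathcal{T}X,Y)\mathcal{T}Z\bigr]
$$
(up to the sign conventions used in the paper). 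The second step is a pointwise linear identification. Since a \pk metric on a $2n$-dimensional manifold is necessarily of neutral signature $(n,n)$, any two such tangent spaces carrying compatible para-complex structures are linearly isomorphic by an isometry intertwining $\mathcal{T}$. Fixing base points $p\in\s_c^N$ and $p'\in\tilde{\s}_c^N$, I would choose such a linear isometry $F_{0}\colon T_{p}\s_c^N\to T_{p'}\tilde{\s}_c^N$. By the first step, $F_{0}$ then automatically carries the curvature tensor at $p$ to that at $p'$, and, because the universal expression above is parallel, the same is true for all covariant derivatives of $R$.

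The final step is globalisation. I would invoke the pseudo-Riemannian Cartan--Ambrose--Hicks theorem: geodesic completeness lets one propagate $F_{0}$ by parallel transport along (broken) geodesics emanating from $p$, producing local isometries; simple connectedness of $\s_c^N$ rules out monodromy and glues these pieces into a global diffeomorphism $F\colon \s_c^N\to\tilde{\s}_c^N$ which is an isometry. Parallelism of $\mathcal{T}$ on both sides, together with the fact that $F_{0}$ already intertwines $\mathcal{T}$ at $p$, forces $F$ to intertwine $\mathcal{T}$ everywhere, hence to be a \pk isometry. The hard part is precisely this last step: unlike the Riemannian case, Hopf--Rinow fails for pseudo-Riemannian metrics, so one cannot simply join any two points by a minimising geodesic, and the patching of local isometries obtained along different broken geodesics requires care. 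What rescues the argument is that the holonomy obstruction lives in the stabiliser of $(g,\mathcal{T},R)$ at a single point and vanishes identically thanks to the universal form of $R$ from the first step and the simple connectedness of the base.
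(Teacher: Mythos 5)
The paper does not actually prove this proposition: it is imported verbatim from \cite{gm} (Prop.\ 3.11), so there is no in-paper argument to compare yours against. Your strategy --- (i) constant para-holomorphic sectional curvature forces the universal algebraic expression of $R$ in terms of $c$, $g$ and $\mathcal{T}$, hence $\nabla R=0$; (ii) a pointwise linear isometry intertwining the para-complex structures; (iii) globalisation by a pseudo-Riemannian Cartan--Ambrose--Hicks argument --- is the standard route and is essentially the proof one finds in the literature for such uniqueness statements, so in outline it is correct.

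Two points need tightening. First, in step (i) the para-holomorphic sectional curvature is only defined on non-degenerate para-holomorphic planes (those with $g(X,X)\neq 0$; note the plane $\mathrm{span}(X,\mathcal{T}X)$ has signature $(1,1)$), so the polarisation identity is obtained on an open dense set of directions and the curvature formula is then extended by continuity; this should be said, since it is precisely where the pseudo-Riemannian case differs from the K\"ahler one. Second, and more importantly, your closing explanation of the gluing step is not the right mechanism: the stabiliser of $(g,\mathcal{T},R)$ at a point is a large group (essentially $\mathrm{U}_N(\D)$ or its projective analogue) and certainly does not ``vanish identically''. What actually makes the argument work is the locally symmetric space version of Cartan's theorem (valid in arbitrary signature, cf.\ O'Neill, \emph{Semi-Riemannian Geometry}, Ch.\ 8): because $\nabla R=0$ the curvature-preservation hypothesis along all broken geodesics is automatic once it holds at the base point; two developments along homotopic broken geodesics agree because a local isometry of connected pseudo-Riemannian manifolds is determined by its $1$-jet, and simple connectedness of the source then yields a well-defined local isometry $F$; completeness of the source makes $F$ a pseudo-Riemannian covering onto the (complete) target, and simple connectedness of the target upgrades it to a diffeomorphism. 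With the construction phrased this way, $dF$ is at every point a composition of parallel transports with $F_0$, so the intertwining of $\mathcal{T}$ propagates exactly as you claim and $F$ is para-holomorphic. I would regard these as repairable imprecisions rather than fatal gaps, but as written the last paragraph does not yet constitute a proof of the patching step.
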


\noindent By \cite{gm}, we have that  an open subset of a \pk space form is para-holomorphically isometric to an open subset of one  of the subsequent models, according to their (constant) para-holomorphic sectional curvature. Therefore, since we mainly interested in local \pk immersions into open subsets of \pk\ space forms, we are going to assume that  $\left(\s_c^N,\omega_c\right)$  is  one of the following models.

\begin{description}
\item[Flat case:] The model of the flat \pk space
\begin{equation}\label{omega0}
\left(\s_0^N,\omega_0\right)=\left(\D^N,{\tau}\de\bar\de \|z\|^2\right),
\end{equation}
whose potential $\Phi$ reads in null-coordinates $(\xi_1,\mathellipsis, \xi_N,\eta_1,\mathellipsis,\eta_N)$ as 
\begin{equation}\label{dn}
 \Phi=4\sum_{i=1}^N \xi_i\eta_i =D_0(\xi,\eta), \end{equation}
where $D_0$ is defined in Remark \ref{D0},
is an example of homogeneous space with respect to its para-holomorphic isometry group. Such group consists of translations and $\D$-unitary transformations $z\in\D^N\to Az\in \D^N$, where $A$ belongs to the \emph{$\D$-unitary group} $$\mathrm{U}_N(\D)=\{A\in \D^{N,N}\ |\ \|Aw\|^2=\|w\|^2\ \forall w\in\D^N\}.$$
\item[Non-flat cases:]
Similarly to the real and complex setting, the para-complex projective space $\DP^N$ can be defined as the quotient of
$$\{Z\in\D^{N+1} \ |\ \|Z\|^2>0\}$$
under the equivalence relation given by
$Z\sim W$ if and only if there exists $\alpha\in\D$ such that $Z=\alpha W$ with $|\alpha|^2>0$.

Our model of non-flat \pk space form will be
\begin{equation}
(\s_c^N,\omega_c)=\left(\DP^N,\frac{4\tau}{c}\de\bar\de\log \|Z\|^2 \right).
\end{equation}
In null-coordinates  $(\xi_1,\mathellipsis, \xi_N,\eta_1,\mathellipsis,\eta_N)$ of the affine chart $\mathcal{U}_\alpha:=\{[Z_0,\mathellipsis,Z_N]\in \DP^N \ |\ |Z_\alpha|^2\neq 0\}$, where $\alpha=1,\mathellipsis,n$, i.e., $\xi_i e+\eta_i \bar e= \frac{Z_i}{Z_\alpha}$ for any $i\neq \alpha$, the potential $\Phi$ is equal to 
\begin{equation}\label{dpn}
\Phi= \frac{8}{c}\log\left( 1+2\sum_{i=1}^N \xi_i\eta_i\right)=D_0(\xi,\eta). \end{equation}
Since the action of $\mathrm{U}_{N+1}(\D)$ passes to the quotient, we can easily see that these \pk space forms are homogeneous with respect to the action of their para-holomorphic isometry groups.
\end{description}

\section{Para-K\"ahler immersions and full \pk immersions in space forms}\label{fullpk}
\begin{df}
Let  $(S,\theta)$  and $(M,\omega)$ be two \pk manifolds. A \emph{\pk immersion} of $(S,\theta)$ into $(M,\omega)$ is a para-holomorphic immersion $f: S\to M$ such that $f^*(\omega)=\theta$.
\end{df}
If $h$ and $g$ are, respectively, the  pseudo-Riemannian metric associated to $\theta$ and $\omega$, then we also have that $f^*(g)=h$.

\begin{prop}[Hereditary Property]\label{her}
Let $(S,\theta)$  and $(M,\omega)$ be two \pk manifolds and  let 
$$f: (S,\theta)\to(M,\omega)$$ be a \pk immersion. Let $D^S$ and $D^M$ be  the diastasis functions of $S$ and $M$, respectively. 
If $p\in S$  and if $D^S_p$ (cfr. Remark \ref{D0}) is defined on an open subset $U\ni p$ of $S$, then
$$D^S_p(q)=D^M_{f(p)}\left(f(q)\right),\qquad \forall q\in U.$$
\end{prop}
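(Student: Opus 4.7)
The plan is to exploit the naturality of the operator $\de\bar\de$ under para-holomorphic pullback together with the uniqueness statement of Proposition \ref{uniqueness}. If $\Phi_M$ is any local \pk potential for $\omega$ defined in a neighborhood $V$ of $f(p)$, I would show that $\Phi_M\circ f$ is itself a local \pk potential for $\theta$ on a suitable neighborhood of $p$; then I would compute $D^S_p$ from this particular choice of potential and directly match it, term by term, with $D^M_{f(p)}\circ f$.

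First, I would fix null-coordinates $(\xi,\eta)=(\xi_1,\dots,\xi_N,\eta_1,\dots,\eta_N)$ on a neighborhood $V=\Omega_M\times\Omega_M$ of $f(p)$ on which $\Phi_M$ is defined, and then shrink $U$ around $p$ so that $U=\Omega_S\times\Omega_S$ in suitable null-coordinates $(\tilde\xi,\tilde\eta)$ on $S$ and so that $f(U)\subset V$. By Remark \ref{ph1} applied componentwise through Definition \ref{ph2}, the para-holomorphicity of $f$ forces the null-coordinate expression of $f$ to split as
\begin{equation*}
f(\tilde\xi,\tilde\eta)=\bigl(F(\tilde\xi),\,G(\tilde\eta)\bigr)
\end{equation*}
for smooth maps $F,G:\Omega_S\to\Omega_M$. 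This decomposition is the essential geometric ingredient that makes the identity work.

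Next, since $f$ is para-holomorphic, pullback commutes with $\de$ and $\bar\de$, so
\begin{equation*}
\frac{\tau}{2}\de\bar\de(\Phi_M\circ f)=f^*\!\left(\frac{\tau}{2}\de\bar\de\Phi_M\right)=f^*\omega=\theta
\end{equation*}
on $U$; hence $\Phi_M\circ f$ is a \pk potential for $\theta$ on $U$. By Proposition \ref{uniqueness}, the value $D^S_p(q)$ may be computed from this potential via formula \eqref{diast}. Denoting the null-coordinates of $p$ and $q$ by $(\xi^p,\eta^p)$ and $(\xi^q,\eta^q)$, the splitting of $f$ then yields
\begin{equation*}
D^S_p(q)=\Phi_M(F(\xi^q),G(\eta^q))-\Phi_M(F(\xi^p),G(\eta^q))-\Phi_M(F(\xi^q),G(\eta^p))+\Phi_M(F(\xi^p),G(\eta^p));
\end{equation*}
but the four arguments on the right are exactly the null-coordinates of $f(q)$, of the two ``mixed'' points, and of $f(p)$, so by formula \eqref{diast} applied on $M$ this expression equals $D^M_{f(p)}(f(q))$.

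I do not anticipate a substantial obstacle here: the only care needed is in the setup, namely shrinking $U$ so that it is a product $\Omega_S\times\Omega_S$ in null-coordinates, that the four ``box'' points $(\xi^p,\eta^p)$, $(\xi^p,\eta^q)$, $(\xi^q,\eta^p)$, $(\xi^q,\eta^q)$ all lie in $U$, and that their $f$-images lie in $V$. The one step that requires more than routine bookkeeping is the splitting of $f$ in null-coordinates, but this is an immediate consequence of Remark \ref{ph1}, so the argument reduces to a direct substitution once the potential $\Phi_M\circ f$ has been identified.
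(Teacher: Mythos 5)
Your proposal is correct and follows essentially the same route as the paper: the paper observes that $\de\bar\de(\Phi^M\circ f-\Phi^S)=0$ and invokes the definition \eqref{diast}, which is exactly your argument that $\Phi_M\circ f$ is a potential for $\theta$ together with the independence of the diastasis from the choice of potential. The only difference is that you make explicit the null-coordinate splitting $f(\tilde\xi,\tilde\eta)=(F(\tilde\xi),G(\tilde\eta))$ needed to identify the box combination of $\Phi_M\circ f$ with $D^M_{f(p)}(f(q))$, a step the paper leaves implicit.
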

\begin{proof}
If $\Phi^S$ is a potential for $(S,\theta)$ around $p$ and $\Phi^M$ is a potential for $(M,\omega)$ around $f(p)$, then, by taking into account that $f^*(\omega)=\theta$, we have that
$$\de\bar\de\left( \Phi^M\circ f-\Phi^S\right)=0.$$
Therefore, the statement of the proposition follows from the diastasis' definition, see \eqref{diast}.
\end{proof}
Diastasis function, together with its properties, in particular the hereditary one, will be used to characterize \pk Einstein manifolds admitting a \pk immersion into a \pk space form, following the ideas present in \cite{ms1,ms2}.

\subsection{Rigidity of full \pk immersions}\label{rig}

In the  K\"ahler case, a K\"ahler  immersion of a K\"ahler manifold into a complex space form is full if such space form has the smallest dimension. An important property of such immersions is that their restriction to any open subset of the K\"ahler  manifold is still full.
On the contrary, this property does not hold true in the \pk context. For this reason, we need to distinguish two kinds of full \pk immersions: weakly and strongly ones.
\begin{df}
A \pk immersion  $f:(M^n,\omega)\to (\s_c^N,\omega_c)$ is said to be 
\begin{itemize}
\item \emph{weakly full} if and only if $(M,\omega)$ does not admit a \pk immersion into any $ (\s_c^{N^*},\omega_c)$ with $N^*<N$;
\item \emph{strongly full} if and only if
the restriction of $f$ to any open subset of $M$ is weakly full.\end{itemize}
\end{df}
Indeed,  the restriction of  a weakly full immersion to an open subset might be not weakly full (see the example described in Section \ref{counterex}).

\begin{thm}[Rigidity]\label{rigidity}
Let $f$ and $g$ be  two weakly full \pk immersions from an open subset $U$ of a \pk manifold $(M^n,\omega)$ into $(\s_c^{N_1},\omega_c)$ and  $(\s_c^{N_2},\omega_c)$, respectively. Let also assume that
the diastasis function is defined on $U\times U$.  Then,  $N_1=N_2$ and 
$f$ differs from $g$ for a para-holomorphic isometry of the ambient space.
\end{thm}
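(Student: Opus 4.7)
The plan is to combine the Hereditary Property (Proposition~\ref{her}) with the explicit form of the diastasis on \pk space forms, reducing the problem to an algebraic identity between decompositions of bivariate functions. First I would apply Proposition~\ref{her} to both immersions, obtaining
$$D^{\s_c^{N_1}}_{f(p)}(f(q)) \;=\; D^{M}_p(q) \;=\; D^{\s_c^{N_2}}_{g(p)}(g(q))$$
for all $p,q\in U$. After fixing a base point $p_0\in U$ and precomposing $f$ and $g$ with suitable para-holomorphic isometries of the ambient (a translation in the flat case; an element of $\mathrm{U}_{N_i+1}(\D)$ sending $f(p_0)$, resp.\ $g(p_0)$, to $[1:0:\cdots:0]$ in the non-flat case), we may assume both $f(p_0)$ and $g(p_0)$ lie at the origin of an affine null-chart. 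Writing the null-coordinates of $f$ as $(F_1,F_2)$ and of $g$ as $(G_1,G_2)$, para-holomorphicity (Remark~\ref{ph1}) forces $F_1=F_1(\xi)$, $F_2=F_2(\eta)$, $G_1=G_1(\xi)$, $G_2=G_2(\eta)$. Substituting into \eqref{dn} in the flat case, or into \eqref{dpn} followed by $\exp(c(\cdot)/8)$ in the non-flat case, and using the vanishing at $p_0$ to kill the mixed boundary terms, collapses the equality of diastases to the single functional identity
$$\sum_{i=1}^{N_1} F_1^i(\xi)\, F_2^i(\eta) \;=\; \sum_{j=1}^{N_2} G_1^j(\xi)\, G_2^j(\eta). \qquad (\ast)$$

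The second step is an algebraic lemma: if in $(\ast)$ each of the four families $\{F_1^i\},\{F_2^i\},\{G_1^j\},\{G_2^j\}$ is linearly independent over $\R$, then $N_1=N_2=:N$ and there exists an invertible matrix $B$ with $G_2=BF_2$ and $G_1=B^{-T}F_1$. This is proved by selecting $N_1$ points $\xi_k$ at which the matrix $\bigl(F_1^i(\xi_k)\bigr)_{i,k}$ is invertible, inverting to express each $F_2^i(\eta)$ as a linear combination of the $G_2^j(\eta)$, and arguing symmetrically; the resulting equality of spans forces $N_1=N_2$, after which the residual relation combined with independence of $\{F_2^i\}$ produces $B$. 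Linear independence itself is a consequence of weak fullness: a nontrivial relation $F_1^{N_1}=\sum_{i<N_1}c_iF_1^i$ lets one absorb the last summand by setting $\tilde F_2^i:=F_2^i+c_iF_2^{N_1}$, exhibiting a \pk immersion of $U$ into $\s_c^{N_1-1}$ and contradicting weak fullness (non-degeneracy of $f^*\omega_c=\omega$ guarantees that the reduced map remains an immersion, and para-holomorphicity is preserved since each $\tilde F_2^i$ depends only on $\eta$).

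Finally, the pair $(B^{-T},B)$ reassembles into the $\D$-linear map $A=B^{-T}e+B\bar e$ on $\D^N$, which is automatically $\D$-unitary since $(B^{-T})^TB=I$. In the flat case $A$ itself is a para-holomorphic isometry of $\s_0^N$ carrying $f$ to $g$; in the non-flat case the element $\mathrm{diag}(1,A)\in\mathrm{U}_{N+1}(\D)$ fixes $[1:0:\cdots:0]$ and acts as $A$ in the affine null-chart, achieving the same effect. Undoing the normalizations from the first step yields the required ambient para-holomorphic isometry. The main obstacle I foresee is the passage from weak fullness to linear independence: one must carefully verify that the proposed reduction of a linearly dependent family genuinely produces a \pk immersion into a lower-dimensional space form, both in the flat and in the projective non-flat setting, since this is the only place where the \emph{weak} (as opposed to strong) fullness hypothesis is exploited.
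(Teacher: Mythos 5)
Your proposal is correct, and it shares the skeleton of the paper's proof (Proposition~\ref{her}, normalization by homogeneity so that $f(p_0)=g(p_0)$ is the origin of a null chart, then comparison of the two images), but the decisive step is handled by a genuinely different mechanism. The paper detects the dimension of the $\D$-linear span of $f(U)$ through the real quadratic form $\sum_{i,j}\alpha_i\alpha_j\langle f(p_i),f(p_j)\rangle$ rewritten via the diastasis, and then asserts that equality of $\|f(q)\|^2$ and $\|g(q)\|^2$ together with equality of span dimensions yields a $\D$-unitary transformation carrying $f$ to $g$. You instead pass immediately to the null-coordinate factorization $\sum_i F_1^i(\xi)F_2^i(\eta)=\sum_j G_1^j(\xi)G_2^j(\eta)$ --- essentially the identity \eqref{problem} for the function $H_c$ of \eqref{H} --- derive linear independence of the component families from weak fullness by the absorption trick, and prove an explicit uniqueness-of-factorization lemma producing the matrix $B$ and hence the unitary $A=B^{-T}e+B\bar e$. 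Your route is longer but buys two things: it makes the final ``differ by a $\D$-unitary map'' step completely explicit (worth doing here, since the pairing $\langle\cdot,\cdot\rangle$ on $\D^N$ is indefinite, so the Gram-matrix argument cannot be closed exactly as in Calabi's positive-definite Hermitian setting without the extra care your lemma supplies), and it ties the rigidity statement directly to the decomposition \eqref{problem} that drives Proposition~\ref{existence} and Theorem~\ref{main}. The point you flag yourself --- that reducing a linearly dependent family really yields a \pk immersion into $\s_c^{N-1}$ --- does go through: para-holomorphicity is preserved because the modified $\bar e$-components still depend only on $\eta$, and nondegeneracy of the pulled-back form forces the reduced map to remain an immersion; in the non-flat case the same absorption is performed on the lifted affine components and then reprojected.
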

\begin{proof}
%Up to shrinking $U$, we can assume that the  diastasis function is defined on $U$.

\smallskip\noindent
We fix a para-holomorphic coordinate system 
$$z=(z_1,\mathellipsis,z_n)$$ centred at an arbitrary chosen point $p\in U$.

\smallskip
\noindent
Let firstly consider the case $c=0$.

\noindent
Being $\s_0^N$ a homogeneous space with respect to the action of its para-holomorphic isometry group, we can assume without loss generality that $f(p)=g(p)=0$.

\noindent
Since $f$ and $g$ are para-holomorphic, $f(U)$, as well as $g(U)$, cannot be  a real subspace of $\D^N$. Indeed, if $f(z)=\overline{f(z)}$ for any $z$, then $f$ would be a constant by \eqref{dezbar}. 

\noindent
We now choose $s$ points $p_1,\mathellipsis,p_s\in U$ different from $p$ and we consider $s$ real constants $\alpha_i$ such that
$$\sum_{i=1}^s \alpha_i f(p_i)=0,$$ 
from which
$$\sum_{i,j=1}^s \alpha_i\alpha_j f(p_i) \overline{ f(p_j)}=0.$$ 
In view of Proposition \ref{her}, by considering the definition of diastasis \eqref{diast} and by taking into account \eqref{dn},  we obtain that $D^{U}(p_i,p)=\|f(p_i)\|^2$. Hence, the previous equality can be written as
$$\sum_{i=1}^s \alpha_i^2 D^{U}(p_i,p)+\sum_{i<j} \alpha_i\alpha_j \left(D^{U}(p_i,p)+D^{U}(p_j,p)-D^{U}(p_i,p_j) \right) =0.$$ 
Then, it follows that the maximal number of linearly independent vectors $f(p_i)$ depends only on the diastasis function, not on the immersion.
Therefore, since both immersions $f$ and $g$ are weakly full,  the dimensions of the ambient spaces need to be equal.
Furthermore,  in view  again of Proposition \ref{her}, we have that
\begin{equation*}\label{diasteq}
D_{f(p)}^{\s_0^N}(f(q))=D_{g(p)}^{\s_0^N}(g(q)),\end{equation*}
namely
$$\|f(q)\|^2=\|g(q)\|^2, \qquad \forall q\in U\,.$$
By considering that $f(U)$ and $g(U)$ span respectively two $\D$-linear spaces having the same dimension, then such immersions differ from each other for a $\D$-unitary transformation.\\

\noindent
Now, let us consider the case $c\neq 0$.

\noindent
Being $\s_c^N$ a homogeneous space with respect to the action of its para-holomorphic isometry group, we can assume without loss generality that $f(p)=g(p)=[1:0:\mathellipsis : 0]$ and that their images are contained in the affine chart   $\{[Z_0,\mathellipsis,Z_N]\in \DP^N \ |\ Z_0\neq 0\}$. Hence, our immersions can be computed from
$$\tilde f,\tilde g: (M^n,\omega)\to \left(\D^{N+1}, \frac{4\tau}{c}\de\bar\de\log \|Z\|^2 \right)$$ 
by considering the canonical projection.
More precisely, in view of Proposition \ref{her}, by taking into account the definition of diastasis \eqref{diast} and  \eqref{dpn}, we have that 
$$D^{U}(p,q) =\frac{4}{c}\log\left( \frac{\|\tilde f(p)\|^2 \ \|\tilde f(q)\|^2}{\big|\langle \tilde f(p),\tilde f(q) \rangle\big|^2}\right).$$
Our statement follows by considering very similar arguments we adopted (above) in the  proof for the flat case.
\end{proof}

In Section \ref{counterex}, it is described an example of \pk manifold that better illustrates the local character of \pk immersions. To understand this example, one needs only Proposition \ref{existence} below.

\subsection{Characterization of full \pk immersions}\label{pk immersion}

To start with, below we give another application  of Proposition \ref{her}, which will be useful for characterizing weakly full \pk immersions into a \pk space form of para-holomorphic curvature $c\in \R$, through a suitable function $H_c$ of the diastasis.

\smallskip\noindent
If $$f: (M^n,\omega)\to (\s_c^N,\omega_c), \qquad n\leq N,$$
is a \pk immersion reading locally as (cfr. Remark \ref{ph1} and Definition \ref{ph2})
\begin{equation}\label{immersion}
f(\xi_1e+\eta_1\bar e,\mathellipsis,\xi_ne+\eta_n\bar e)=\left(u_1(\xi) e+v_1(\eta)\bar e,\mathellipsis, u_N(\xi)e+v_N(\eta)\bar e \right),\end{equation}
where $(\xi,\eta)=(\xi_1,\mathellipsis,\xi_n,\eta_1,\mathellipsis,\eta_n)$ and $(u,v)=(u_1,\mathellipsis,u_N,v_1,\mathellipsis,v_N)$ are null-coordinates on 
 an open subset
$U=\Omega\times\Omega\subseteq M^n$ and $\s_c^N$, respectively, then we have
$$D_0^{M^n}(\xi e+\eta \bar e)=D_{f(0)}^{\s_c^N}(ue+v\bar e).$$
Since, up to change of coordinates, we can assume that $f(0)=0$, i.e., in coordinates,
\begin{equation}\label{u0v0}
u(0)=0, \qquad v(0)=0,\end{equation} then
 we get, in view of \eqref{dn} and \eqref{dpn},    that
\begin{equation}
D_0^{M^n}(\xi e+\eta \bar e)=
\begin{cases}
4\sum_{i=1}^N u_i(\xi)v_i(\eta) &\text{if } c=0;\\
\frac{8}{c}\log\left(1+2\sum_{i=1}^N u_i(\xi)v_i(\eta)\right) &\text{if } c\neq 0.\\
\end{cases}
\end{equation}
Therefore, the function $H_c :U=\Omega\times\Omega\subseteq M^n\to \R$ defined by
\begin{equation}\label{H}
H_c(\xi,\eta):=
\begin{cases}
\frac{1}{4}D_0^{M^n}(\xi e+\eta \bar e)&\text{if } c=0;\\
\frac{1}{2}\exp\left( \frac{c}{8}D_0^{M^n}(\xi e+\eta \bar e)\right)-\frac{1}{2} &\text{if } c\neq 0,\\
\end{cases}
\end{equation}
reads as
\begin{equation}\label{problem}
H_c(\xi,\eta)=\sum_{\alpha=1}^N u_\alpha(\xi)v_\alpha(\eta).
\end{equation}

Vice versa, given a \pk manifold $(M^n,\omega)$, an arbitrary point $p\in M$ and a system of null-coordinates $(\xi,\eta)$ centered at $p$, whenever the condition \eqref{problem} holds true, with $H_c$  defined by \eqref{H}, there exists  a neighborhood of $p$ that can be \pk immersed into $(\s_c^N,\omega_c)$ via a \pk immersion reading as \eqref{immersion}. In fact, being the diastasis function $D_0$ a \pk potential (see Remark \ref{D0}), we notice that 
$$\det\left(\frac{\de^2 H_c}{\de \xi_i\de\eta_j}(0,0)\right)_{1\leq i,j\leq n}\neq 0.$$ Therefore,  by taking into account \eqref{H}, we get that  the Jacobians of both $(u_1,\mathellipsis,u_N)$ and $(v_1,\mathellipsis,v_N)$  need to have maximal rank in a neighborhood of $p$.
To sum up we have the following  proposition.
\begin{prop}\label{existence}
Let $U$ be an open subset of \pk manifold $(M^n,\omega)$ where it is defined a system of null-coordinates $(\xi,\eta)$. Then, there exists a weakly full \pk immersion
 $f: (U,\omega|_U)\to (\s_c^N,\omega_c)$
if and only if the function $H_c$ defined by \eqref{H} reads as \eqref{problem} with the smallest possible $N$. 
\end{prop}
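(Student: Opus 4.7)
The plan is to prove both implications, exploiting the Hereditary Property (Proposition \ref{her}) to translate the geometric existence question into the algebraic decomposition \eqref{problem}, and the non-degeneracy of the mixed Hessian of a \pk potential to ensure that such a decomposition actually produces an immersion.

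For the ``only if'' direction, suppose $f\colon (U,\omega|_U)\to (\s_c^N,\omega_c)$ is a weakly full \pk immersion. Since $\s_c^N$ is homogeneous under its para-holomorphic isometry group, I post-compose $f$ with an isometry so that $f$ maps the centre of the null-coordinates to $0$, as in \eqref{u0v0}. Writing $f$ in null-coordinates as in \eqref{immersion} (the splitting into a $\xi$-depending and an $\eta$-depending part being forced by Remark \ref{ph1}), the Hereditary Property gives $D_0^{M^n}(\xi e+\eta\bar e)=D_{f(0)}^{\s_c^N}\bigl(u(\xi)e+v(\eta)\bar e\bigr)$. Substituting the explicit expressions \eqref{dn} or \eqref{dpn} for the diastasis on the model \pk space forms and applying the change of variable \eqref{H} yields exactly \eqref{problem}. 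Minimality of $N$ follows from weak fullness: any shorter decomposition would, via the converse direction below, produce a \pk immersion into some $\s_c^{N^*}$ with $N^*<N$, contradicting the hypothesis.

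For the ``if'' direction, assume $H_c(\xi,\eta)=\sum_{\alpha=1}^N u_\alpha(\xi)v_\alpha(\eta)$ with $N$ minimal, and define $f$ by formula \eqref{immersion}. Para-holomorphicity is immediate from Remark \ref{ph1}. To see that $f$ is an immersion, observe that $D_0^{M^n}$ is a \pk potential by Remark \ref{D0}, so the mixed Hessian $\bigl(\partial^2 D_0^{M^n}/\partial\xi_i\partial\eta_j\bigr)$ is non-singular at $(0,0)$; this non-degeneracy transfers to $H_c$ because \eqref{H} relates them via an invertible smooth transformation, and then \eqref{problem} forces both Jacobians $(\partial u_\alpha/\partial\xi_i)$ and $(\partial v_\alpha/\partial\eta_j)$ to have maximal rank $n$ at the origin. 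To verify $f^*(\omega_c)=\omega|_U$, I compare \pk potentials: by construction the pullback along $f$ of the model potential in \eqref{dn} or \eqref{dpn} produces, via \eqref{problem}, precisely $D_0^{M^n}$, and two potentials for the same \pk form on a product neighbourhood differ only by a sum of a function of $\xi$ and a function of $\eta$ (this is exactly the ambiguity used in the proof of Proposition \ref{uniqueness}). Hence $\partial\bar\partial$ applied to either gives the same \pk form, so $f^*\omega_c=\omega|_U$. Finally, weak fullness of $f$ follows from the minimality of $N$, again by invoking the ``only if'' direction: a \pk immersion of $U$ into any $\s_c^{N^*}$ with $N^*<N$ would furnish a shorter factorization of $H_c$.

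The step that will require the most care is the verification that $f^*\omega_c=\omega|_U$, since one must correctly handle the additive indeterminacy in the choice of \pk potential; the equivalence between the minimality of $N$ in \eqref{problem} and the weak fullness of $f$ is conceptually delicate as well, because it uses the two directions of the proposition in tandem, together with the rigidity statement of Theorem \ref{rigidity} to rule out the existence of ``hidden'' lower-dimensional immersions.
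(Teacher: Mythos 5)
Your proposal is correct and follows essentially the same route as the paper: the ``only if'' direction via the Hereditary Property applied to the explicit model diastases \eqref{dn}, \eqref{dpn} and the change of variable \eqref{H}, and the ``if'' direction via the non-degeneracy of the mixed Hessian of $D_0$ (hence of $H_c$) forcing the Jacobians of $(u_1,\mathellipsis,u_N)$ and $(v_1,\mathellipsis,v_N)$ to have maximal rank, with weak fullness matching minimality of $N$. You spell out a couple of steps the paper leaves implicit (the verification of $f^*\omega_c=\omega|_U$ and the equivalence of minimality with weak fullness), but the underlying argument is the same.
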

Below, in Theorem  \ref{main}, we provide some more practical criteria to verify if the conditions of the previous proposition hold true. To this aim, we need to  introduce the following definition.
\begin{df}
A \emph{generalized Wronskian} of $u_1(\xi),\mathellipsis,u_N(\xi)$ is any determinant of the type
$$\det\begin{pmatrix}
\mathcal{D}_0(u_1)&\dots&\mathcal{D}_0(u_N) \\
\vdots & &\vdots\\
\mathcal{D}_{N-1}(u_1)&\dots&\mathcal{D}_{N-1}(u_N)\\
\end{pmatrix},$$
where $\mathcal{D}_j$ denotes a partial derivative $\frac{\de^{|I|}}{\de\xi^I}$ where $|I|\leq j$. In particular, $\mathcal{D}_0=\mathrm{id}$. A \emph{generalized $r\times r$ sub-Wronskian} of $u_1,\mathellipsis,u_N$ is a Wronskian of a subset  of $u_1,\mathellipsis,u_N$ containing $r$ elements. A point $p$ where all the functions $\{u_1,\mathellipsis,u_N\}$ are defined,
is said to be of \emph{order} $\ord(p)\in\N$, if any $r\times r$ sub-Wronskian vanishes at $p$ for  $r>\ord(p)$.
\end{df}

\begin{thm}\label{main}
Let $p$ be a point of a \pk manifold $(M^n,\omega)$. Let $U=\Omega\times\Omega$ be a neighborhood of $p$ where it is defined a system of null-coordinates $(\xi,\eta)$ centered at $p$. Let the diastasis function be defined on $U\times U$.
Then  necessary conditions for the existence of a  weakly  full \pk immersion 
$$f: (U,\omega|_U)\to (\s_c^N,\omega_c)$$
are the following:\\
There exist two sets  $\I,\J\subset \N^n$ of multi-indices, containing 0 and having finite cardinality, for which $\forall\, K\in\N^n$ there exist
smooth functions $a^I_K, b^J_K:\Omega \to\R $, where $I\in\I$, $J\in\J$, $K\in\N^n$ and two  smooth functions 
$a, b$ not identically zero in a neighborhood  of $0$ with $a(0)=b(0)=0$, 
such that
\begin{equation}\label{dip1}
a(\xi)\ \frac{\de^{|K|}H_c}{{\de\xi^{K}}}(\xi,\eta) + \sum_{I\in\I} a^I_K(\xi)\   \frac{\de^{|I|}H_c}{\de\xi^{I}} (\xi,\eta) \equiv 0 
%,\qquad\forall K\in\N^n,
\end{equation}
and
\begin{equation}\label{dip2}
b(\eta)\ \frac{\de^{|K|}H_c}{{\de\eta^{K}}}(\xi,\eta) + \sum_{J\in\J} b^J_K(\eta)\   \frac{\de^{|J|}H_c}{\de\eta^{J}} (\xi,\eta) \equiv 0
%,\qquad \forall K\in\N^n,
\end{equation}
where the function $H_c$ is defined by \eqref{H}.  The above conditions are sufficient by assuming that either \begin{equation}\label{eq:A}
\Omega'=\big\{\xi\in\Omega\ | \ a(\xi)\neq 0\big\}\,,
\end{equation}
or $\big\{\eta\in\Omega\ | \ b(\eta)\neq 0\big\}$,
is a connected  open dense subset of $\Omega$. This leads actually to a strongly full \pk immersion.
\end{thm}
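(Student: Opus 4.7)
\emph{Overall strategy and necessity.} By Proposition \ref{existence}, a weakly full \pk immersion $f\colon(U,\omega|_U)\to(\s_c^N,\omega_c)$ exists if and only if $H_c(\xi,\eta)=\sum_{\alpha=1}^{N}u_\alpha(\xi)v_\alpha(\eta)$ with minimal $N$. My plan is to translate this \emph{finite rank} property of $H_c$ into \eqref{dip1}--\eqref{dip2}: necessity via Cramer's rule applied to the matrix of $\xi$-derivatives $(u_\alpha^{(I)}(\xi))_{I,\alpha}$, and sufficiency by reading \eqref{dip1} as a linear first-order PDE system in $\xi$ whose solution reassembles the product decomposition of $H_c$. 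For necessity, I assume $f$ is weakly full and write $H_c=\sum_\alpha u_\alpha v_\alpha$; by \eqref{u0v0}, $u_\alpha(0)=v_\alpha(0)=0$, and weak fullness forces both $\{u_\alpha\}$ and $\{v_\alpha\}$ to be $\R$-linearly independent. Differentiating gives $\de^{|K|}H_c/\de\xi^K=\sum_\alpha u_\alpha^{(K)}(\xi)v_\alpha(\eta)$ for every $K\in\N^n$, where $u_\alpha^{(K)}:=\de^{|K|}u_\alpha/\de\xi^K$. I then choose $\I\ni 0$ with $|\I|=N$ so that $a(\xi):=\det\bigl(u_\alpha^{(I)}(\xi)\bigr)_{I\in\I,\alpha}$ is not identically zero in a neighborhood of $0$; necessarily $a(0)=0$, since the row $I=0$ is $(u_\alpha(0))_\alpha=0$. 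Cramer's rule on the system $\sum_{I\in\I} x_I u_\alpha^{(I)}(\xi)=u_\alpha^{(K)}(\xi)$, for each $K\in\N^n$, yields $a(\xi)u_\alpha^{(K)}(\xi)=\sum_{I\in\I} a_K^I(\xi)\,u_\alpha^{(I)}(\xi)$, with $a_K^I$ equal to signed $(N-1)\times(N-1)$ minors; multiplying by $v_\alpha(\eta)$ and summing over $\alpha$ produces \eqref{dip1}. Exchanging the roles of $\xi$ and $\eta$ yields \eqref{dip2}.

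\emph{Sufficiency.} Assume now that \eqref{dip1} holds and that $\Omega'=\{a\neq 0\}$ is connected, open and dense in $\Omega$. On $\Omega'$, dividing \eqref{dip1} by $a$, I set $G_I(\xi,\eta):=\de^{|I|}H_c/\de\xi^I$ for $I\in\I$ and $c_K^I(\xi):=-a_K^I(\xi)/a(\xi)$; then for each $j=1,\ldots,n$,
$$
\frac{\de G_I}{\de\xi_j}(\xi,\eta)=G_{I+e_j}(\xi,\eta)=\sum_{I'\in\I} c_{I+e_j}^{I'}(\xi)\,G_{I'}(\xi,\eta).
$$
This is a linear first-order PDE system for the vector $\vec G(\xi,\cdot)=(G_I(\xi,\cdot))_{I\in\I}$ in $(C^\infty(\Omega))^{|\I|}$, with coefficient matrices smooth on $\Omega'$; the compatibility condition (commutativity of mixed $\xi$-partials) is automatic because $\vec G$ descends from the scalar $H_c$. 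Fixing $\xi_0\in\Omega'$, parallel transport on the connected open set $\Omega'$ produces a smooth matrix $T(\xi)$ with $T(\xi_0)=\mathrm{Id}$ such that $G_I(\xi,\eta)=\sum_{I'\in\I} T_{I,I'}(\xi)\,G_{I'}(\xi_0,\eta)$. Specialising to $I=0$ and setting $u_{I'}(\xi):=T_{0,I'}(\xi)$, $v_{I'}(\eta):=G_{I'}(\xi_0,\eta)$, this reads $H_c(\xi,\eta)=\sum_{I'\in\I} u_{I'}(\xi)\,v_{I'}(\eta)$ on $\Omega'\times\Omega$. Density of $\Omega'$ in $\Omega$, smoothness of $H_c$, and linear independence of a maximal subfamily of the $v_{I'}$'s allow extending the $u_{I'}$'s smoothly from $\Omega'$ to all of $\Omega$, so the decomposition holds on $\Omega\times\Omega$; Proposition \ref{existence} then yields the desired \pk immersion into $\s_c^N$ with $N\leq|\I|$.

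\emph{Strong fullness and main obstacle.} Because the hypotheses \eqref{dip1}--\eqref{dip2} restrict trivially to any open $V\subseteq U$, the same ODE reconstruction, applied over $V$, gives a \pk immersion of $V$ into a space form of the same intrinsic target dimension; together with the rigidity Theorem \ref{rigidity}, this upgrades the immersion to strongly full. The two steps I expect to be delicate are: \emph{(i)} in necessity, selecting $\I$ for which the Wronskian-like determinant $a$ is genuinely non-vanishing in a punctured neighborhood of $0$ — this is where the weak fullness of $f$ and the $C^\infty$ linear independence of $\{u_\alpha\}$ enter essentially, and where the normalisation $a(0)=0$ falls out automatically from the inclusion $0\in\I$ combined with $u_\alpha(0)=0$; and \emph{(ii)} in sufficiency, extending the $u_{I'}$ smoothly across the zero set of $a$. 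The second point exploits that $H_c$ is smooth on all of $U$ and that, after discarding dependent ones, the $v_{I'}$ are $\R$-linearly independent — which forces the coefficients $u_{I'}$ in the decomposition of $H_c$ to be uniquely determined and hence to inherit smoothness.
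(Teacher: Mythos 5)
Your overall architecture (Wronskian-type relations for necessity, a Frobenius-integrable first-order linear system for sufficiency) matches the paper's, but there are three genuine gaps. The first is in necessity: you choose $\I$ with $\#\I=N$ so that the generalized Wronskian $a(\xi)=\det\big(u_\alpha^{(I)}(\xi)\big)$ is not identically zero near $0$, and you justify this by the $\R$-linear independence of the $u_\alpha$ coming from weak fullness. This inference is false in the smooth category: the $u_\alpha$ are only $\mathcal{C}^\infty$ (para-holomorphic functions are not analytic), and linearly independent smooth functions can have all generalized Wronskians identically zero (Peano-type examples). The paper must therefore split into two cases and, when every generalized Wronskian vanishes identically, work at a point of maximal \emph{order} with an $\ord(p)\times\ord(p)$ sub-Wronskian, obtaining \eqref{dip1} with a set $\I$ of cardinality possibly smaller than $N$. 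Your argument silently assumes the favourable case.

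The second gap is in sufficiency: you assert that the Frobenius compatibility of the system $\frac{\de G_I}{\de\xi_j}=\sum_{I'}c^{I'}_{I+e_j}G_{I'}$ is ``automatic because $\vec G$ descends from the scalar $H_c$.'' Equality of mixed partials only tells you that a certain linear combination of the functions $\frac{\de^{|J|}H_c}{\de\xi^J}$, $J\in\I$, vanishes; to conclude that the coefficient matrices themselves satisfy the zero-curvature condition (the paper's \eqref{comp}) — which is what you need for the fundamental solution $T(\xi)$ to exist on all of $\Omega'$ — you must know that the only $\mathcal{C}^\infty(\Omega)$-linear relation among these functions is the trivial one. That is exactly why the paper replaces the hypothesized $\I$ by a carefully constructed maximal independent set in its Step 1; with an arbitrary $\I$ from the hypothesis your parallel-transport step is unjustified (having one particular solution $\vec G(\cdot,\eta)$ for each $\eta$ does not give a fundamental matrix unless the initial vectors $\vec G(\xi_0,\eta)$ span, which is precisely what must be proved, cf.\ the paper's Steps 4--5). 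Third, your strong-fullness argument (``the hypotheses restrict trivially to any open $V$, plus rigidity'') does not work: the danger is that on a smaller open set \emph{more} relations hold, so the restriction could be immersible into a lower-dimensional space form — this is the very phenomenon exhibited in Section \ref{counterex}. One must show, as in the paper's Steps 5--6, that the linear closures of $(v_1,\dots,v_N)(\tilde\Omega)$ and $(u_1,\dots,u_N)(\tilde\Omega)$ have dimension $N$ for \emph{every} open $\tilde\Omega\subseteq\Omega$, again using the maximality built into $\I$.
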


\begin{proof}
By differentiating \eqref{problem}, we get
\begin{equation}\label{s1}
\begin{pmatrix}
\mathcal{D}_0(u_1)&\dots&\mathcal{D}_0(u_N)\\
\vdots & &\vdots\\
\mathcal{D}_{N-1}(u_1)&\dots&\mathcal{D}_{N-1}(u_N)\\
\mathcal{D}_{j}(u_1)&\dots&\mathcal{D}_{j}(u_N)\\
\end{pmatrix}\begin{pmatrix}
 v_1\\
\vdots\\
v_N\end{pmatrix}=\begin{pmatrix}
\mathcal{D}_0(H_c)\\
\vdots\\
\mathcal{D}_{N-1}(H_c)\\
\mathcal{D}_{j}(H_c)
\end{pmatrix}\end{equation}
where the subscript $j$ is arbitrary chosen in $\N$.
Since \eqref{s1} can be seen as a linear system in $(v_1,\mathellipsis,v_N)$, it follows, from its compatibility condition, that
\begin{equation}\label{detnec}
\det\begin{pmatrix}
\mathcal{D}_0(u_1)&\dots&\mathcal{D}_0(u_N) &\mathcal{D}_0(H_c)\\
\vdots & &\vdots&\vdots\\
\mathcal{D}_{N-1}(u_1)&\dots&\mathcal{D}_{N-1}(u_N)&\mathcal{D}_{N-1}(H_c)\\
\dd_j(u_1)&\dots& \dd_j(u_N)&\dd_j(H_c)\\
\end{pmatrix}= 0\qquad \forall (\xi,\eta)\in\Omega\times\Omega.\end{equation}
We notice that the first row of \eqref{detnec} is equal to $0$ at $\xi=0$  by \eqref{u0v0} and by definition of $H_c$.

\smallskip
\noindent
We have now to consider two cases: the first is when there exists a generalized Wronskian which does not vanish identically on $\Omega$ and the second when all generalized Wronskians vanish identically on $\Omega$.

\smallskip
\noindent
\textbf{The case  when there exists a generalized Wronskian which does not vanish identically on $\Omega$.}  In this case, the condition \eqref{dip1} follows straightforwardly from the Laplace's expansion of \eqref{detnec}  w.r.t. the last column, by suitably choosing the operators $\mathcal{D}_0,\mathellipsis,\mathcal{D}_{N-1}$ so that
the corresponding Wronskian of $u_1(\xi),\mathellipsis,u_N(\xi)$
does not vanish identically  on  $\Omega$.

\smallskip
\noindent
\textbf{The case  when all generalized Wronskians vanish identically on $\Omega$.} Firstly, we notice that we cannot have open subsets of $\Omega$ consisting only of zero-order points.
Indeed, if we assume the existence of a subset $\tilde\Omega\subseteq \Omega$ of such type, all the restrictions of the function $u_1,\mathellipsis, u_N$ to $\tilde\Omega$ would be identically zero. Being these functions the components of an immersion, we would get a contradiction.

\smallskip\noindent
Now, we pick a point $p\in\Omega$ of maximal order. Hence, up to renaming the functions $u_i$, we have
$$
\det \begin{pmatrix}
\mathcal{D}_0(u_1)&\dots&\mathcal{D}_0(u_{\ord(p)})\\
\vdots & &\vdots\\
\mathcal{D}_{\ord(p)-1}(u_1)&\dots&\mathcal{D}_{\ord(p)-1}(u_{\ord(p)})\\
\end{pmatrix}(p)\neq 0.$$
 By taking into account the system 
$$\begin{pmatrix}
\mathcal{D}_0(u_1)&\dots&\mathcal{D}_0(u_N)\\
\vdots & &\vdots\\
\mathcal{D}_{\ord(p)-1}(u_1)&\dots&\mathcal{D}_{\ord(p)-1}(u_N)\\
\mathcal{D}_{j}(u_1)&\dots&\mathcal{D}_{j}(u_N)\\
\end{pmatrix}\begin{pmatrix}
 v_1\\
\vdots\\
v_N\end{pmatrix}=\begin{pmatrix}
\mathcal{D}_0(H_c)\\
\vdots\\
\mathcal{D}_{\ord(p)-1}(H_c)\\
\mathcal{D}_{j}(H_c)
\end{pmatrix},$$
where the subscript $j$ is  arbitrary chosen in $\N$,
we have, up to renaming $u_i$, that
$$\det\begin{pmatrix}
\mathcal{D}_0(u_1)&\dots&\mathcal{D}_0(u_{\ord(p)}) &\mathcal{D}_0(H_c)\\
\vdots & &\vdots&\vdots\\
\mathcal{D}_{\ord(p)-1}(u_1)&\dots&\mathcal{D}_{\ord(p)-1}(u_{\ord(p)})&\mathcal{D}_{\ord(p)-1}(H_c)\\
\dd_j(u_1)&\dots& \dd_j(u_{\ord(p)})&\dd_j(H_c)\\
\end{pmatrix}= 0\qquad \forall (\xi,\eta)\in\Omega\times\Omega.$$
The condition \eqref{dip1} follows from the Laplace's expansion w.r.t. the last column of the previous determinant.\\
With a similar reasoning, we get \eqref{dip2}.

\medskip\noindent
Now we prove that condition stated in the Theorem are also sufficient under the hypothesis that $\Omega'$, given by \eqref{eq:A}, be a connected open dense subset of $\Omega$.
This will be done in several steps.

\bigskip\noindent
\textbf{\large Step 1.} To start with, in this step, we define a distinguished set of multi-indices (that we are going to call $\I$ with slight abuse of notation), whose cardinality is strictly related to the dimension of the ambient space form. 
\smallskip\noindent
 Let  $\mathcal{I}=\bigcup_{i\in\N}\mathcal{I}_i\subseteq\N^n$, where
\begin{itemize}
\item $\mathcal{I}_0=\{I_0\}$,
\item $\mathcal{I}_{i+1}=\mathcal{I}_i\cup\{I_{i+1}\}$ 
%(if $\left\{\frac{\de^{|I|}H_c}{\de \xi^I}\right\}_{I\in\mathcal{I}_i\cup\{I_{i+1}\}}$ are not identically zero and)
 if   
$$ \sum_{I\in\mathcal{I}_i\cup\{I_{i+1}\}}c_I(\xi)\frac{\de^{|I|} H_c}{\de\xi^I}(\xi,\eta)\equiv 0$$
holds true if and only if all the $c_I\in \mathcal{C}^\infty(\Omega)$ vanish identically on $\Omega$. 
Otherwise $\mathcal{I}_{i+1}=\mathcal{I}_i$.
\end{itemize}
%Similarly, let $\mathcal{J}=\bigcup_{i\in\N}\mathcal{J}_i\subseteq\N^n$, where
%\begin{itemize}
%\item $\mathcal{J}_0=\{J_0\}$,
%\item $\mathcal{J}_{i+1}=\mathcal{J}_i\cup\{J_{i+1}\}$ 
% if   
%$$ \sum_{J\in\mathcal{J}_i\cup\{J_{i+1}\}}d_J(\eta)\frac{\de^{|J|} H_c}{\de\eta^J}(\xi,\eta)\equiv 0$$
%holds true if and only if all the $d_J\in \mathcal{C}^\infty(\Omega)$ vanish identically on $\Omega$.
%Otherwise $\mathcal{J}_{i+1}=\mathcal{J}_i$.
%\end{itemize} 
Therefore, in view of the conditions \eqref{dip1} and by construction, $\#\I\in\N$. Let$$N=\#\I.$$
Below we are going to show the existence of a  strongly full  \pk immersion  
$$f: (M^n,\omega)\to (\s_c^{N},\omega_c).$$

\medskip\noindent
\textbf{\large Step 2.} In this step, we will study a particular system of first order PDEs whose compatibility is strictly related to the existence of a \pk immersion \eqref{immersion}.

\smallskip\noindent
We define $e_k\in\N^n$ as follows:
$$e_k=(0,\mathellipsis,0,1,0\mathellipsis,0),\quad \text{where $1$ is at the $k$-th position}.$$
The condition \eqref{dip1} reads in particular as 
\begin{equation}\label{dip}
a(\xi)\ \frac{\de^{|I|+1}H_c}{{\de\xi^{I+e_k}}}(\xi,\eta) + \sum_{J\in\mathcal{I}} a^J_{I+e_k}(\xi)\   \frac{\de^{|J|}H_c}{\de\xi^{J}} (\xi,\eta) \equiv 0,\end{equation}
for any $I\in\I$ and  for   $1\leq k \leq n$.
The functions 
$$
A_k{}^J_I:=\frac{a^J_{I+e_k}}{a}
$$
are defined on the open non-empty subset $\Omega'$, see \eqref{eq:A}.
Now we are going to study the following system of first order PDEs,
\begin{equation}\label{sistema}
\frac{\de U_{IJ}}{\de\xi_k}-\sum_{L\in\I}A_k{}^L_J \, U_{IL}=0\,,\quad k=1,\dots,n\,,
\end{equation}
where $U_{IJ}$ are $N^2$ unknown functions defined on $\Omega'$, whose solutions will allow us to prove that $H_c$ reads as \eqref{problem}.  This will be done  in Step 3.

\noindent
By recalling the definition \eqref{iota} of $\iota$, we can define the $N\times N$ matrices $U$ and $A_k$, for any $k\in\{1,\dots,n\}$, as the matrices whose entries are, respectively, $U_{\iota(I)\iota(J)}$ and $A_k{}^{\iota(I)}_{\iota(J)}$, where $(I,J)\in\I\times\I$:
\begin{equation}\label{U.Ak}
U=\left(U_{\iota(I)\iota(J)}\right)\,,\quad A_k=\left(A_k{}^{\iota(I)}_{\iota(J)}\right)\,.
\end{equation}
Keeping in mind this notation, system \eqref{sistema} assumes the more concise form:
\begin{equation}\label{sistema2}
\frac{\de U }{\de\xi_k }-U\cdot A_k=0\,,\quad k=1,\dots,n\,.
\end{equation}
We can successfully study the above system thanks to the following lemma, that is a classical result coming from the Frobenius Theorem on the complete integrability of vector distributions \cite{warner}.
\begin{lemma}\label{lemma.Frob}
Let $\xi=(\xi_1,\dots,\xi_n)$ be coordinates on a connected open subset $\mathcal{W}$ and $V=(V_1,\dots,V_m)$, where $V_i\in\mathcal{C}^\infty(\mathcal{W})$, be a row vector. The following Cauchy problem
\begin{equation}\label{eq:sistema.V}
\left\{
\begin{array}{ll}
%\partial_{\xi_k}V_i+\sum_{a=1}^m B_{ki}^a V_a=0 
\frac{\de V }{\de\xi_k }+ V\cdot B_k=0 &
\\
\\
V(\xi_0)=V_{0}&
\end{array}
\right.
\end{equation}
where $B_k=\big(B_k{}^i_j\big)$ with
$B_k{}^i_j\in\mathcal{C}^\infty(\mathcal{W})$ and
$\xi_0\in\mathcal{W}$, such that the compatibility conditions
\begin{equation}\label{eq:cond.comp.5}
\frac{\de B_h }{\de\xi_k }+B_h\cdot B_k-\frac{\de B_k }{\de\xi_h }-B_k\cdot B_h=0\,,\quad\forall\,k,h\,,
\end{equation}
are satisfied, admits a unique solution defined on $\mathcal{W}$.
\end{lemma}
\begin{proof}
%We notice that \eqref{sistema} is a Pfaffian system of $N^2$ unknown functions $U_{IJ}$ and $n$ independent variables $\xi_k$, i.e. solutions $U_{IJ}=U_{IJ}(\xi)$ annihilates the $N^2$ differential 1-forms
%$$\rho_{IJ}= d U_{IJ}-\sum_{L\in\I,1\leq k\leq n }A_L^{(J,e_k)} U_{IL}\ d\xi_k.$$
%The system $\{\rho_{IJ}=0\}_{I,J\in\I}$ has constant maximal rank equal to $N^2$ and, since it satisfies the compatibility conditions \eqref{comp}, it admits a unique maximal solution such that $U_{IJ}(\xi_0)=U_{IJ}^0$, where $\xi_0\in\mathcal{A}$ and $U_{IJ}^0$ is an arbitrary chosen $N\times N$ real matrix.
We only notice that \eqref{eq:sistema.V} is a Pfaffian system of $m$ unknown functions $V_i$ and $n$ independent variables $\xi_k$, i.e., any solution $V=V(\xi)$ annihilates the $m$ differential 1-forms
$$
\rho_i= d V_i + \sum_{a=1}^m\sum_{k=1}^n B_k{}^a_i V_a\, d\xi_k\,, \quad i=1,\dots, m\,.
$$
The system $\{\rho_i=0\}_{i=1,\dots,m}$ has constant maximal rank equal to $m$ and, since its (Frobenius) compatibility conditions
$$
d\rho_i=\sum_{j=1}^m\rho_{ij}\wedge\rho_j\,,\quad i=1,\dots,m
$$
where $\rho_{ij}$ are suitable $1$-differential forms,
are satisfied if and only if \eqref{eq:cond.comp.5} holds, the assertion of the lemma follows.
\end{proof}
Now, going back to system \eqref{sistema}, since it is of type \eqref{eq:sistema.V} (with some obvious substitutions), in order to apply Lemma \ref{lemma.Frob} to such system,
we shall show that its compatibility conditions are satisfied. To this aim, in view of \eqref{dip}, we have
\begin{multline}\label{1}
\frac{\de}{\de \xi_h}     \frac{\de^{|I|+1} H }{\de\xi^{I+e_k} }= 
- \sum_{J\in\mathcal{I}} \frac{\de A_k{}^J_I }{\de\xi_h }
 \,  \frac{\de^{|J|} H }{\de\xi^J }-  \sum_{J\in\mathcal{I}} A_k{}^J_I\,    \frac{\de^{|J|+1} H }{\de\xi^{J+e_h} }
= \sum_{J\in\mathcal{I}}\left( -  \frac{\de  A_k{}^J_I}{\de \xi_h } +  \sum_{L\in\mathcal{I}}A_k{}^L_I A_h{}^J_L\right)   \frac{\de^{|J|} H }{\de\xi^J }
\end{multline}
and
\begin{equation}\label{2}
\frac{\de}{\de \xi_k}     \frac{\de^{|I|+1} H }{\de\xi^{I+e_h} }  = 
%- \sum_{J\in\mathcal{I}}\de_{\xi^h} A_J^{(I,e_k)}\   \de_{\xi^{J}}^{|J|} H-  \sum_{J\in\mathcal{I}} A_J^{(I,e_k)}\   \de_{\xi^{J+e_h}}^{|J|+1} H=
 \sum_{J\in\mathcal{I}}\left( -  \frac{\de A_h{}^J_I  }{\de \xi_k} +  \sum_{L\in\mathcal{I}}A_h{}^L_I A_k{}^J_L\right)  \frac{\de^{|J|} H }{\de\xi^J }\,.
\end{equation}
By subtracting \eqref{2} to \eqref{1} and by taking into account the construction of $\I$, we get
\begin{equation}\label{comp}
\frac{\de A_h{}^J_I }{\de \xi_k} - \frac{\de A_k{}^J_I }{\de \xi_h} +  \sum_{L\in\mathcal{I}} A_h{}^J_LA_k{}^L_I  -  \sum_{L\in\mathcal{I}} A_k{}^J_L A_h{}^L_I =0\,,
\end{equation}
that are exactly the compatibility conditions of system \eqref{sistema}.

\noindent
Let us adopt the notation \eqref{U.Ak}-\eqref{sistema2}.
We underline that if a solution $U=U(\xi)$ of system \eqref{sistema2} is such that $\det(U(\xi'))\neq 0$ for some $\xi'\in\Omega'$ (see \eqref{eq:A}), then $\det(U(\xi))\neq 0$ for all $\xi\in\Omega'$.
 This follows from the following consideration. If, by contradiction, there existed $\xi_0\in \Omega'$ such that $\det(U(\xi_0))=0$, then it would exist a (row) vector $v\neq 0$ such that $v\cdot U(\xi_0)=0$. Therefore, the function $v\cdot U:\xi\mapsto v\cdot U(\xi)$ would be a solution of the following Cauchy problem
$$
\left\{
\begin{array}{l}
\frac{\de }{\de \xi_k}(v\cdot U)-(v\cdot U)\cdot A_k=0
\\
\\
(v\cdot U)(\xi_0)=0
\end{array}
\right.
$$
implying, in view of Lemma \ref{lemma.Frob}, $(v\cdot U)(\xi)=0$ for all $\xi\in\Omega'$, in particular $(v\cdot U)(\xi')=v\cdot U(\xi')=0$, leading to $\det(U(\xi'))=0$, a contradiction.

%$({}^tv\cdot U_{IJ})(\xi)=0$ for all $\xi$, that contradicts the hypothesis $\det(U_{IJ}(\xi')\neq 0$.
%
%
%it would exist a vector $v\neq 0$ such that $(U_{IJ}(\xi'))\cdot v=0$

\medskip\noindent
\textbf{\large Step 3.}
In this step, we will show that the equality  \eqref{problem} holds true on $\Omega'\times\Omega$.

\smallskip\noindent
 To this aim,  we fix  $\xi_0\in\Omega'$ and a matrix $U_0\in \R^{N,N}$ with $\det U_0\neq 0$.
Taking into account the last part of Step 2 and Lemma \ref{lemma.Frob}, there exists a unique solution $U=U(\xi)$, defined on  $\Omega'$, to system \eqref{sistema2} such that $U(\xi_0)=U_0$.
By recalling the definition \eqref{iota} of $\iota$, if 
\begin{equation}\label{v}
v_{\iota(I)}=\sum_{J\in\I} U_{IJ}\,  \frac{\de^{|J|} H }{\de\xi^J }, 
\end{equation}
then
$\frac{\de v_{\iota(I)}}{\de \xi_k}\equiv 0$ for any $I\in\I$ and for any $1\leq k\leq n$. In fact, by \eqref{dip} and \eqref{sistema}
$$
\frac{\de v_{\iota(I)}}{\de \xi_k}=\sum_{J\in\I} \frac{\de U_{IJ} }{\de\xi_k }\,\frac{\de^{|J|} H }{\de\xi^J } + \sum_{J\in\I}U_{IJ}\, \frac{\de^{|J+1|}H}{\de\xi^{J+e_k}}
= 
\sum_{J\in\I} \sum_{L\in\I} A_k{}^L_J \,U_{IL}\, \frac{\de^{|J|} H }{\de\xi^J } -\sum_{J\in\I} \sum_{L\in\I}A_k{}^L_J \,U_{IL}\frac{\de^{|J|} H }{\de\xi^J }=0\,.
$$
Therefore, $v_{\iota(I)}=v_{\iota(I)}(\eta)\in\mathcal{C}^\infty(\Omega)$ for any $I\in\I$. {Furthermore, we notice that, by definition of $H_c$, we have $\frac{\de^{|J|} H_c}{\de \xi^J}(\xi,0)=0$ for any $J\in\N^n$ and for any $\xi\in\Omega$. Hence, in view of \eqref{v}, we have
$$v_i(0)=0,\qquad \forall i=1\mathellipsis,N.$$}
Keeping in mind the notation \eqref{U.Ak} and let
\begin{equation}\label{u}
u(\xi)=(1,0,\mathellipsis,0)\cdot  U^{-1}\end{equation}
where $(1,0,\mathellipsis,0)$ is a $1\times N$ matrix, we have, by construction,
\begin{equation}\label{eq}
\sum_{i=1}^N u_i(\xi)v_i(\eta)=H(\xi,\eta),\qquad \forall(\xi,\eta)\in  \Omega'\times \Omega.\end{equation}

\medskip\noindent
\textbf{\large Step 4.} 
In this step, we are going to extend \eqref{eq} to $\Omega\times\Omega$ when the linear closure of $(v_1,\mathellipsis,v_N)\left(\Omega\right)$ has dimension $N$. 
Under such assumption, there exist some $\eta_1,\mathellipsis,\eta_N\in\Omega$ such that
\begin{equation*}\det
 \begin{pmatrix}
v_1(\eta_1)&\dots&v_1(\eta_N)\\
\vdots & &\vdots\\
 v_N(\eta_1)&\dots& v_N(\eta_N)
\end{pmatrix}\neq 0.
\end{equation*}
By considering \eqref{eq}, we have
$$ 
(u_1,\mathellipsis,u_N)\,(\xi)=\big(H_c(\xi,\eta_1),\mathellipsis,H_c(\xi,\eta_N)\big)
 \begin{pmatrix}
v_1(\eta_1)&\dots&v_1(\eta_N)\\
\vdots & &\vdots\\
 v_N(\eta_1)&\dots& v_N(\eta_N)
\end{pmatrix}^{-1}, \qquad \forall\xi\in\Omega'.
$$
Therefore, $u_1,\mathellipsis,u_N$ admit a smooth extension to $\Omega$. Thus, we have that \eqref{eq} holds true on the whole $\Omega\times\Omega$. Moreover, since $H_c(0,\eta)\equiv 0$ by construction, we have that $(u_1,\mathellipsis,u_N)(0)=0$.

\medskip\noindent
\textbf{\large Step 5.} 
Finally, we  show that  the linear closure of $(v_1,\mathellipsis,v_N)\left(\Omega\right)$ cannot have dimension  less than $N$. Indeed, if we assume by contradiction  the  existence of some $\eta_1,\mathellipsis,\eta_\rho\in\Omega$ such that
\begin{equation}	\label{rank1}
\mathrm{rank} \begin{pmatrix}
v_1(\eta_1)&\dots&v_1(\eta_\rho)\\
\vdots & &\vdots\\
 v_N(\eta_1)&\dots& v_N(\eta_\rho)
\end{pmatrix}= \rho
\end{equation}
 and 
\begin{equation}	\label{rank2}
\mathrm{rank} \begin{pmatrix}
v_1(\eta_1)&\dots&v_1(\eta_\rho)&v_1(\eta)\\
\vdots & &\vdots&\vdots\\
 v_N(\eta_1)&\dots& v_N(\eta_\rho)&v_N(\eta)\\
\end{pmatrix}= \rho,\qquad \forall \eta\in\Omega,\end{equation}
then, by considering the construction \eqref{v} of the functions $v$ (recall that $U$ is an invertible matrix) and  by taking into account \eqref{rank2},   we have that
$$\mathrm{rank} \begin{pmatrix}
\frac{\de^{| I_{k_1}| }H_c}{\de \xi^{I_{k_1}}}(\xi,\eta_1)&\dots& \frac{\de^{| I_{k_1}| }H_c}{\de \xi^{I_{k_1}}}(\xi,\eta_\rho)&\frac{\de^{|I_{k_1} | }H_c}{\de \xi^{I_{k_1}}}(\xi,\eta)\\
\vdots & &\vdots&\vdots\\
\frac{\de^{|I_{k_N} | }H_c}{\de \xi^{I_{k_N}}}(\xi,\eta_1)&\dots& \frac{\de^{|I_{k_N} | }H_c}{\de \xi^{I_{k_N}}}(\xi,\eta_\rho)&\frac{\de^{| I_{k_N}| }H_c}{\de \xi^{I_{k_N}}}(\xi,\eta)\\
\end{pmatrix}= \rho,\qquad \forall \eta\in\Omega,\ \forall \xi\in\Omega',$$
where $\I=\{I_{k_1},\mathellipsis,I_{k_N}\}$. It follows that
$$\det \begin{pmatrix}
\frac{\de^{| I_{k_1}| }H_c}{\de \xi^{I_{k_1}}}(\xi,\eta_1)&\dots& \frac{\de^{| I_{k_1}| }H_c}{\de \xi^{I_{k_1}}}(\xi,\eta_\rho)&\frac{\de^{|I_{k_1} | }H_c}{\de \xi^{I_{k_1}}}(\xi,\eta)\\
\vdots & &\vdots&\vdots\\
\frac{\de^{|I_{k_{\rho+1}} | }H_c}{\de \xi^{I_{k_{\rho+1}}}}(\xi,\eta_1)&\dots& \frac{\de^{|I_{k_{\rho+1}} | }H_c}{\de \xi^{I_{k_{\rho+1}}}}(\xi,\eta_\rho)&\frac{\de^{| I_{k_{\rho+1}}| }H_c}{\de \xi^{I_{k_{\rho+1}}}}(\xi,\eta)\\
\end{pmatrix}= 0,\qquad \forall \eta\in\Omega,\ \forall \xi\in\Omega'.$$
By the Laplace's expansion of the previous determinant w.r.t the last column, we get
\begin{equation}\label{restr.eq} \sum_{i=1}^{\rho+1} c_i(\xi) \frac{\de^{|I_{k_i} | }H_c}{\de \xi^{I_{k_i}}}(\xi,\eta)=0, \qquad \forall (\xi,\eta)\in\Omega'\times\Omega,\end{equation}
where $$c_i:\Omega\to\R$$ denotes the cofactor correspondent to $i$-th entry of the last column. 
By taking into account \eqref{rank1} and \eqref{v}, not all $c_i$ can be identically zero. 
Since we are assuming that all the indices $I_{k_i}$ belong to $\I$, we get a contradiction to the construction of $\I$, cfr. Step 1.

\medskip\noindent
\textbf{\large Step 6.} 
We notice that the \pk immersion obtained by means the previous steps is strongly full. In fact, with a reasoning similar the one adopted in Step 5, we can prove that the dimension of the linear closure $(v_1,\mathellipsis,v_N)(\tilde\Omega)$ is $N$ for any open subset $\tilde\Omega$ of $\Omega$. Moreover, also the dimension of the linear closure $(u_1,\mathellipsis,u_N)(\tilde\Omega)$ is $N$ for any open subset $\tilde\Omega$ of $\Omega$. Indeed, by differentiating \eqref{problem} w.r.t. $\xi^{I_k}$ for any $I_k\in\I$, we get a linear system whose compatibility condition reads as
$$\mathrm{rank}\begin{pmatrix}
\frac{\de^{| I_{k_1}| }u_1}{\de \xi^{I_{k_1}}}&\dots& \frac{\de^{| I_{k_1}| }u_N}{\de \xi^{I_{k_1}}}\\
\vdots & &\vdots\\
\frac{\de^{|I_{k_N} | }u_1}{\de \xi^{I_{k_N}}}&\dots& \frac{\de^{|I_{k_N} | }u_N}{\de \xi^{I_{k_N}}}\\
\end{pmatrix}=\mathrm{rank}\begin{pmatrix}
\frac{\de^{| I_{k_1}| }u_1}{\de \xi^{I_{k_1}}}&\dots& \frac{\de^{| I_{k_1}| }u_N}{\de \xi^{I_{k_1}}}&\frac{\de^{|I_{k_1} | }H_c}{\de \xi^{I_{k_1}}}\\
\vdots & &\vdots&\vdots\\
\frac{\de^{|I_{k_N} | }u_1}{\de \xi^{I_{k_N}}}&\dots& \frac{\de^{|I_{k_N} | }u_N}{\de \xi^{I_{k_N}}}&\frac{\de^{| I_{k_N}| }H_c}{\de \xi^{I_{k_N}}}\\
\end{pmatrix} \qquad \text{on } \Omega\times\Omega.$$
Hence, if the left side of the previous equality is at most equal to $\rho<N$ on $\tilde\Omega\subset\Omega$, then we have
$$\det \begin{pmatrix}
\frac{\de^{| I_{k_1}| }u_1}{\de \xi^{I_{k_1}}}&\dots& \frac{\de^{| I_{k_1}| }u_N}{\de \xi^{I_{k_1}}}&\frac{\de^{|I_{k_1} | }H_c}{\de \xi^{I_{k_1}}}\\
\vdots & &\vdots&\vdots\\
\frac{\de^{|I_{k_{\rho+1}} | }u_1}{\de \xi^{I_{k_{\rho+1}}}}&\dots& \frac{\de^{|I_{k_{\rho+1}} | }u_N}{\de \xi^{I_{k_{\rho+1}}}}&\frac{\de^{| I_{k_{\rho+1}}| }H_c}{\de \xi^{I_{k_{\rho+1}}}}\\
\end{pmatrix}= 0,\qquad \forall \eta\in\Omega,\ \forall \xi\in\tilde \Omega.$$
Since we can assume w.l.g. that not all the $(\rho\times\rho)$-minors of the previous matrix vanish identically on $\tilde\Omega$, the Laplace's expansion w.r.t. the last column leads us to a contradiction to the construction of $\I$, cfr. Step 1.
\end{proof}

To conclude, we provide an example of some \pk metrics satisfying the  connectedness and density conditions of \eqref{eq:A}.
\begin{ex}
Let $(\xi,\eta)=(\xi_1,\mathellipsis ,\xi_n,\eta_1,\mathellipsis ,\eta_n)$ be a  system of null-coordinates coordinates on an $n$-dimensional \pk manifold, such that the restriction $D_0(\xi,\eta)$ of its diastasis function to a  neighborhood $\Omega\times \Omega$ of the center of the coordinate system, is related (cfr. \eqref{H}) to the function $H_c$ reading as
$$H_c(\xi,\eta)=\sum_{i=1}^n\xi_i\eta_i+\left(\xi_1^2+\mathellipsis+\xi_n^2\right)^h\left(\eta_1^2+\mathellipsis+\eta_n^2\right)^k,$$
where $h$ and $k$ denote two arbitrary positive integer numbers. Via straightforward computations, we get that
$$\left(\eta_1^2+\mathellipsis+\eta_n^2\right)^k \frac{\de^{|I|}H_c}{\de \eta^I}+ a_I(\eta) \left( \sum_{i=1}^n \eta_i \frac{\de H_c}{\de \eta_i}-H_c\right)=0$$
holds true for any $I\in\N^n$ such that $|I|\geq 2$ (after suitably choosing the smooth functions $a_I:\Omega\to\R$). Therefore, the set $\Omega'$, defined by \eqref{eq:A}, can be obtained from $\Omega$ by removing only the projection on $\Omega$ of the center of the coordinates system.
\end{ex}

\subsection{On the local character and global extendability of \pk immersions}\label{counterex}

As proved by Calabi in \cite{Cal}, if an open subset of a connected K\"ahler manifold can be holomorphically  and isometrically immersed (i.e.  K\"ahler immersed) into a complex space form, then any point of such manifold admits a neighborhood that can be K\"ahler immersed in the same ambient space. Moreover, any  K\"ahler immersion into a complex space form defined on an open subset of  a  simply connected K\"ahler manifold can be extended to an immersion defined on the whole manifold.\\
As the following example shows, such global aspects of local K\"ahler  immersions are not shared with their \pk counterparts.

\smallskip

Indeed, below we are going to show that there exists a \pk structure $\omega$ on $\D=\D^1$ for which, for any point $z_0\in\D$, there exist neighborhoods of $z_0$  covering $\D$, such that they are weakly full \pk immersed in flat \pk space forms of different dimensions. As a consequence, it turns out that $(\D,\omega)$ cannot be globally \pk immersed into any flat \pk space form.

\smallskip

Let $i\in \N$ and let $u_i:\R\to\R$ be the smooth (non-analytic in $i$) function defined by
\begin{equation}
u_i(\xi)=\begin{cases}
\exp\left( -\frac{1}{(\xi-i)^{i+1}}\right) & \text{if } \xi> i\\
0 & \text{if } \xi\leq i
\end{cases}
\end{equation}
We consider the one dimensional para-complex manifold $\D$ endowed with the symplectic form associated to the (global) potential
\begin{equation}\begin{array}{rccl}
\Phi:&\D&\longrightarrow&\R\\
&(\xi e+\eta \bar e)&\longmapsto& \xi \eta+\sum_{i=0}^{+\infty} u_i(\xi) \eta^{2i+3},\\
\end{array}
\end{equation}
namely, the 2-form reading in null-coordinates $(\xi,\eta)$ as
$$\omega=\frac{1}{2}\left(1 +\sum_{i=0}^{+\infty} (2i+3)u_i'(\xi) \eta^{2i+2}\right) d\xi\wedge d\eta,$$
where
$$u_i'(\xi)=\begin{cases}
\frac{\exp\left( -\frac{1}{(\xi-i)^{i+1}}\right)}{(\xi-i)^{i+2}} & \text{if } \xi> i\\
0 & \text{if } \xi\leq i
\end{cases}$$
%Let $z_0=\xi_0 e +\eta_0\bar e\in \D$. If $\xi_0<0$, 
Let
$$\mathcal{U}_i:=\{\xi e+\eta \bar e\in \D\ |\  (\xi,\eta)\in\left(-\infty, i+1 \right)\times \R\}, \quad i\in\N\,.$$
%Since $$\Phi\big|_{\mathcal{U}_i}(\xi,\eta)=\xi \eta,$$
% we have $\omega\big|_{U_{(\xi_0,\eta_0)}}=\omega_0$, where $\omega_0$ is defined by \eqref{omega0}.\\
%If $\xi_0\geq 0$ and $\xi_0\not\in \N$, let
%$$\tilde U_{(\xi_0,\eta_0)}:=\{\xi e+\eta \bar e\in \D\ |\  (\xi,\eta)\in\big(\mathrm{floor}(\xi_0), \mathrm{ceil}(\xi_0) \big)\times \R\}.$$
Since 
$$\Phi\big|_{\mathcal{U}_i}(\xi,\eta)= \xi \eta+\sum_{j=0}^{i} u_j(\xi) \eta^{2j+3}, $$ 
in view of \eqref{H}, there exists   a weakly full \pk immersion of $\left(\mathcal{U}_i,\omega\big|_{\mathcal{U}_i}\right)$ into $(\D^{i+1},\omega_0)$. \\

Furthermore, by considering that $\D=\cup_{i\in\N} \ \mathcal{U}_i$, we conclude  that $(\D,\omega)$ cannot be globally \pk immersed into any flat \pk space form.
%
%
%So we proved  that $ U_{(\xi_0,\eta_0)}$ and $\tilde U_{(\xi_0,\eta_0)}$ are respectively \pk immersed into $(\D^1,\omega_0)$ and $(\D^{\mathrm{floor}(\xi_0)+1},\omega_0)$.
%%
%Hence, if $\xi_0\in\N$, then we have  a full \pk immersion of $\left(U_{(\xi_0,\eta_0)},\omega\big|_{U_{(\xi_0,\eta_0)}}\right)$ into $(\D^{\xi_0+2},\omega_0)$, where
%$$U_{(\xi_0,\eta_0)}:=\{\xi e+\eta \bar e\in \D\ |\  (\xi,\eta)\in\left(-\infty, \xi_0+1 \right)\times \R\}.$$
%
%
%\subsubsection{}
%
%Let $\theta_0=0$ and  $\theta_k$ be an  increasing strictly monotonic sequence tending to $2\pi$. Let us define
%$$H_k(\xi_1,\xi_2,\eta_1,\eta_2)=
%\xi_1\eta_1+\xi_2\eta_2+\exp\left(-\frac{1}{(\xi_1+\xi_2)}-\frac{1}{(\arctan(\xi_2/\xi_1)-\theta_k)}
%+\frac{1}{(\arctan(\xi_2/\xi_1)-\theta_{k+1})} \right)(\eta_1^2+k\eta_2^2),$$
%where 
%$$(\xi_1,\xi_2)\in \mathcal{R}_k=\{(\xi_1,\xi_2)\in\R^2\ | \ 0<\xi_1^2+\xi_2^2<r^2 \text{ and } \theta_k<\arctan(\xi_2/\xi_1)<\theta_{k+1}\}.$$
%A straightforward computation shows that $H$

\section{Para-K\"ahler space forms that admit a \pk immersion into another}\label{pkspforms}

As an application of Theorem \ref{main}, in the present section we classify  \pk immersions between \pk space forms. In particular, we are going to prove the following theorem. 
\begin{thm}
A \pk space form $(\s_c^n,\omega_c)$ can be locally \pk immersed into $(\s_b^N,\omega_b)$, where $N\geq n$, if and only if either $c=b=0$ or $b/c\in\Z^+$. Such local immersions can be extended to the whole manifold. Moreover, if
$N=n$ (when $c=b=0$), or if $N =\binom{n+\frac{b}{c}}{n}-1$ (when $\frac{b}{c}\in\Z^+$), these immersions are strongly full.
\end{thm}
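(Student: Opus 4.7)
The plan is to translate the problem, via Proposition \ref{existence}, into asking when the function $H_b$ associated to $(\s_c^n, \omega_c)$ admits a finite decomposition $H_b(\xi,\eta) = \sum_{\alpha=1}^N u_\alpha(\xi) v_\alpha(\eta)$. Taking $\Phi = D_0$ (legitimate by Remark \ref{D0}) and substituting \eqref{dn}, \eqref{dpn} into \eqref{H}, I obtain the explicit expressions
\begin{equation*}
H_b(\xi,\eta) =
\begin{cases}
\sum_{i=1}^n \xi_i\eta_i & (c=b=0),\\
\tfrac{1}{2}\bigl(\exp(\tfrac{b}{2}\sum_{i=1}^n \xi_i\eta_i) - 1\bigr) & (c=0,\ b\neq 0),\\
\tfrac{2}{c}\log\bigl(1 + 2\sum_{i=1}^n \xi_i\eta_i\bigr) & (c\neq 0,\ b=0),\\
\tfrac{1}{2}\bigl((1+2\sum_{i=1}^n\xi_i\eta_i)^{b/c} - 1\bigr) & (c,b\neq 0),
\end{cases}
\end{equation*}
and I will show that exactly the first case, and the fourth case with $b/c\in\Z^+$, give a finite decomposition.

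For the ``if'' direction, when $c=b=0$ the expression $H_0$ is manifestly of the form \eqref{problem} with $N=n$, which yields the identity immersion $\s_0^n\to\s_0^n$ (extending trivially to $\s_0^N$ for $N\geq n$ by padding with zero components). When $b/c=m\in\Z^+$, the multinomial expansion
\begin{equation*}
\bigl(1 + 2\textstyle\sum_{i=1}^n \xi_i\eta_i\bigr)^{m} = \sum_{K\in\N^n,\; |K|\leq m} \binom{m}{K}\, 2^{|K|}\, \xi^K \eta^K
\end{equation*}
writes $H_b$ as a sum of exactly $\binom{n+m}{n}-1$ non-constant products of functions of $\xi$ and of $\eta$; Proposition \ref{existence} then produces a \pk immersion into $\s_b^N$ with $N=\binom{n+m}{n}-1$.

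For the ``only if'' direction, in each remaining case I rule out any finite decomposition by computing the partial evaluations $\eta \mapsto \tfrac{\de^k H_b}{\de\xi_1^k}(0,\eta)$ and showing they form a linearly independent family for $k\in\N$. Concretely these evaluations equal $\tfrac{1}{2}(b/2)^k\eta_1^k$ in the exponential case, $\tfrac{2}{c}(-1)^{k-1}(k-1)!\,(2\eta_1)^k$ in the logarithm case, and $\tfrac{1}{2}\,k!\binom{b/c}{k}(2\eta_1)^k$ in the non-integer power case; the binomial coefficient $\binom{b/c}{k}$ is nonzero for every $k\geq 1$ precisely when $b/c\notin\Z^+$. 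If $H_b = \sum_{\alpha=1}^N u_\alpha(\xi)v_\alpha(\eta)$ were a finite decomposition, every such evaluation would lie in the finite-dimensional span of $v_1,\dots,v_N$, contradicting the infinite linear independence displayed above; equivalently, the set $\I$ constructed in Step 1 of the proof of Theorem \ref{main} is forced to be infinite, so no finite-dimensional ambient space form accepts the immersion.

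Finally I address global extension and strong fullness. In the flat case the immersion is already global. In the non-flat case with $b/c = m$, by the rigidity Theorem \ref{rigidity} the local immersion coincides, up to a para-holomorphic isometry of the target, with the para-Veronese map $\DP^n \to \DP^{\binom{n+m}{n}-1}$ given by the complete list of degree-$m$ monomials in the homogeneous coordinates $[Z_0:\cdots:Z_n]$, which is defined on all of $\DP^n$. Strong fullness in the minimal dimension follows from the sufficiency part of Theorem \ref{main} together with the count $\#\I = \binom{n+m}{n}-1$ (respectively $\#\I = n$) read off the explicit expansion. The point I expect to be most delicate is verifying the connectedness/density hypothesis \eqref{eq:A} inside the proof of Theorem \ref{main}: the functions $a^I_K$ will come from the coefficients of the multinomial expansion, and one has to check that the resulting $a(\xi)$ has a nowhere-dense zero set, so that the sufficiency part indeed applies and produces a strongly full immersion in the claimed minimal dimension.
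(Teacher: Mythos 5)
Your proposal is correct and follows essentially the same route as the paper: reduce via Proposition \ref{existence} to the finite decomposability of $H_b$, compute the same explicit expressions for $H_b$, rule out the forbidden cases by iterated $\xi_1$-derivatives producing independent multiples of $\eta_1^k$ (you specialize at $\xi=0$ and contradict the decomposition directly, where the paper phrases the same computation through the necessary conditions \eqref{dip1} of Theorem \ref{main} --- a cosmetic difference), use the multinomial expansion when $b/c\in\Z^+$, and glue the chartwise immersions via the Rigidity Theorem. The point you flag about verifying \eqref{eq:A} is avoidable: since the decomposition of $H_b$ is an explicit polynomial identity, existence follows from Proposition \ref{existence} alone, and strong fullness follows from the linear independence of the monomials $\xi^K$ (resp.\ $\eta^K$), $0<|K|\le b/c$, on every open subset.
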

\begin{proof}

Since \pk space forms are homogeneous with respect to the action of their para-homolorphic isometry group (see the end of Section \ref{sec.pk.diastasis}), the existence (or non-existence) of  a \pk immersion defined on a neighborhood of an arbitrary chosen point implies the existence (or non-existence) of a \pk immersion defined on a neighborhood of any other point. Therefore, we can study, without loss of generality, only local \pk immersions defined either on a neighborhood of  $0\in \s_0^n$ or on a neighborhood of  $[1,0\mathellipsis,0]\in\s^n_c$, $c\neq 0$. Let us fix a null coordinate system $(\xi,\eta)$ around such point. Keeping in mind the definitions \eqref{dn}, \eqref{dpn} and \eqref{H},
we compute the function $H_b$ in order to apply Theorem \ref{main}.
Notice that a flat complex space form $(\s_0^n,\omega_0)$ can be trivially \pk immersed by inclusion into $(\s_0^N,\omega_0)$, hence only the following cases will be taken into account:
\begin{itemize} 
\item $c\neq 0$ and $b= 0$;
\item $c= 0$ and $b\neq 0$;
\item $c\neq 0$ and $b\neq 0$.
\end{itemize} 
Below we treat the above cases separately.

\medskip
\noindent
$\bullet$ If $c\neq 0$ and $b= 0$, by taking into account   \eqref{H} and that the diastasis $D_0^{\s_c^n}$ is given by \eqref{dpn}, we have that
$$H_0(\xi,\eta)= \frac{2}{c}\log\left( 1+2\sum_{i=1}^n \xi_i\eta_i\right).$$
By differentiating the previous equation with respect to  $\xi_1$, we get, for any $k\in\Z^+$,
$$\frac{\de^k H_0}{\de \xi_1^{k}} = \frac{(-2)^{k+1}(k-1)!}{c \left( 1+2\sum_{i=1}^n \xi_i\eta_i\right)^k}\ \eta_1^k\,.$$
Let now assume that the condition \eqref{dip1} of Theorem \ref{main} holds true for any  $\I\subset \N^n$ such that $\#\I=N+1$. In particular, if 
\begin{equation}\label{II}
\I=\{(k,0,\mathellipsis,0)\ |\ k=1,\mathellipsis,N+1\},\end{equation}
we have that
$$\sum_{k=1}^{N+1} a_k(\xi) \frac{\de^k H_0}{\de \xi_1^{k}} (\xi,\eta)=\frac{ \sum_{k=1}^{N+1} (-2)^{k+1}(k-1)!  \left( 1+2\sum_{i=1}^n \xi_i\eta_i\right)^{N+1-k} a_k(\xi) \eta_1^k}{c \left( 1+2\sum_{i=1}^n \xi_i\eta_i\right)^{N+1}} \equiv 0.$$
%Since
%$$\sum_{k=1}^N (-2)^{k+1}(k-1)!  \left( 1+2\sum_{i=1}^n \xi_i\eta_i\right)^{N-k} a_k(\xi) \eta_1^k=
%4\left( 1+2\sum_{i=1}^n \xi_i\eta_i\right)^{N-1}a_1(\xi)  \eta_1+\sum_{k=2}^{N} p_k(\xi,\eta) \eta_1^k$$
%
%\textbf{GIANNI: SECONDO ME:}
%$$\sum_{k=1}^N (-2)^{k+1}(k-1)!  \left( 1+2\sum_{i=1}^n \xi_i\eta_i\right)^{N-k} a_k(\xi) \eta_1^k=
%4\left( 1+2\sum_{i=2}^n \xi_i\eta_i\right)^{N-1}a_1(\xi)  \eta_1+\sum_{k=2}^{N} p_k(\xi,\eta) \eta_1^k$$
%
%for some suitable smooth functions $p_k$ independent of $\eta_1$, then $a_1(\xi)\equiv 0$. 
%
%\textbf{E' UN POLINOMIO IN $\eta_1$ IL CUI TERMINE DI GRADO 1 E' $4a_1(\xi)$}
Since the numerator in the above equality is a polynomial in $\eta_1,\mathellipsis,\eta_n$ and the coefficient of its monomial in $\eta_1$ of degree 1 is $4a_1(\xi)$, it follows that $a_1(\xi)$ needs to be identically zero.
 By means of similar considerations, we get also that $a_k(\xi)\equiv 0$ for any  $k=2,\mathellipsis,N+1$.
In view of  Theorem  \ref{main}, we  conclude that there  are no local  \pk immersions of a non-flat \pk space form into a flat one.\\

\medskip
\noindent
$\bullet$ 
If $c=0$ and $b\neq 0$, by taking into account   \eqref{H} and that the diastasis $D_0^{\s_0^n}$ is given by \eqref{dn}, we have that
$$ H_b(\xi,\eta)= \frac{1}{2}\exp\left( \frac{b}{2}\sum_{i=1}^n \xi_i\eta_i \right)-\frac{1}{2}.$$
By differentiating the previous equation with respect to  $\xi_1$, we get, for any $k\in\Z^+$,
$$\frac{\de^k H_b}{\de \xi_1^{k}}=\frac{b^k}{2^{k+1}}\exp\left( \frac{b}{2}\sum_{i=1}^n \xi_i\eta_i \right)\ \eta_1^k\,. $$
Now we assume that the condition \eqref{dip1} of Theorem \ref{main} holds true for any  $\I\subset \N^n$ such that $\#\I=N+1$. In particular, if we consider the subset $\I$ of multi-indices \eqref{II}, we have that
$$\sum_{k=1}^{N+1} a_k(\xi) \frac{\de^k H_b}{\de \xi_1^{k}}(\xi,\eta)= \exp\left( \frac{b}{2}\sum_{i=1}^n \xi_i\eta_i \right) \sum_{k=1}^{N+1}\frac{b^k  a_k(\xi) }{2^{k+1}} \eta_1^k \equiv 0,$$
that implies $a_k(\xi)\equiv 0$ for any  $k=1,\mathellipsis,N+1$.
In view of  Theorem  \ref{main}, we  conclude that there are no local \pk immersions of a flat \pk space forms into a non-flat one.\\

\medskip
\noindent
$\bullet$ 
If $c\neq 0$ and $b\neq 0$, by taking into account   \eqref{H} and that the diastasis $D_0^{\s_c^n}$ is given by \eqref{dpn}, we have that
$$ H_b(\xi,\eta)= \frac{1}{2}\left(1+2\sum_{i=1}^n \xi_i\eta_i\right)^\frac{b}{c}-\frac{1}{2}.$$
By differentiating the previous equation with respect to  $\xi_1$, we get, for any $k\in\Z^+$,
$$\frac{\de^k}{\de \xi_1^{k}} H_b= 
2^{k-1} \prod_{j=0}^{k-1}\left(\frac{b}{c}-j\right)\left(1+2\sum_{i=1}^n \xi_i\eta_i\right)^{\frac{b}{c} -k}\eta_1^k\,.$$
Now we assume that the condition \eqref{dip1} of Theorem \ref{main} holds true for any  $\I\subset \N^n$ such that $\#\I=N+1$. In particular, if we consider the subset $\I$ of multi-indices \eqref{II}, we have that
$$\sum_{k=1}^{N+1} a_k(\xi) \frac{\de^k H_b}{\de \xi_1^{k}}(\xi,\eta)=\left(1+2\sum_{i=1}^n \xi_i\eta_i\right)^{\frac{b}{c} -N-1}
\sum_{k=1}^{N+1} a_k(\xi) 2^{k-1} \prod_{j=0}^{k-1}\left(\frac{b}{c}-j\right)\left(1+2\sum_{i=1}^n \xi_i\eta_i\right)^{N+1 -k}\eta_1^k \equiv 0.$$
%Since
%$$\sum_{k=1}^N a_k(\xi) 2^{k-1} \prod_{j=0}^{k-1}\left(\frac{b}{c}-j\right)\left(1+2\sum_{i=1}^n \xi_i\eta_i\right)^{N -k}\eta_1^k = \left(1+2\sum_{i=2}^n \xi_i\eta_i\right)^{N -1} a_1(\xi) \eta_1+ \sum_{k=2}^N q_k(\xi,\eta)\eta_1^k $$
%for some smooth functions $q_k$, we have $a_1(\xi)\equiv 0$.
Since the above equality can be seen as a polynomial in $\eta_1,\mathellipsis,\eta_n$ and the coefficient of its monomial in $\eta_1$ of degree 1 is $\frac{b}{c}a_1(\xi)$, it follows that $a_1(\xi)$ needs to be identically zero.
If $\frac{b}{c}\not\in\Z^+$, then we get, by means of similar considerations, that also $a_k(\xi)\equiv 0$ for any  $k=2,\mathellipsis,N+1$. Otherwise, if $\frac{b}{c}\in\Z^+$, then $H_b$ reads as \eqref{problem}, so, in view of Proposition \ref{existence} and Theorem \ref{main}, there exists, for any $i=0,\mathellipsis,n$,
a strongly full local \pk immersion 
$$f_i:\big( \mathcal{U}_i=\{[Z]\in\DP^n\ | \ |Z_i|^2\neq0\},\omega_c\big) \to \left(\s_b^{\binom{n+\frac{b}{c}}{n}-1},\omega_b\right)\,.$$
Indeed, we need at least $\binom{n+\frac{b}{c}}{n}-1$ monomials to describe the polynomial $H_b$.

Finally, we notice that it follows from the Rigidity Theorem \ref{rigidity} the existence of a para-holomorphic isometry  $F$ of the ambient space such that  $f_i|_{\mathcal{U}_i\cap\mathcal{U}_j }= F\circ f_j|_{\mathcal{U}_i\cap\mathcal{U}_j }$. Hence, we have that any local immersion $f_i$ can be extended to a global immersion.
\end{proof}

\subsection*{Acknowledgements:}

G.~Manno gratefully acknowledges support by  ``Finanziamento alla Ricerca'' \texttt{53\_RBA17MANGIO} and \texttt{53\_RBA21MANGIO},  and PRIN project 2022 ``Real and Complex Manifolds: Topology, Geometry and
holomorphic dynamics'' (code \texttt{2022AP8HZ9}).  The second author is supported by the ``Starting Grant'' under the contact number \texttt{53\_RSG22SALFIL}.  Both authors are members of GNSAGA
of INdAM.

\vspace{1.5cm}
\noindent
\textsc{(G. Manno) Dipartimento di Scienze Matematiche ``G. L. Lagrange'', Politecnico di Torino, Corso Duca degli Abruzzi 24, 10129 Torino.}\\
\textit{Email address:} \texttt{giovanni.manno@polito.it}\\

\noindent
\textsc{(F. Salis) Dipartimento di Scienze Matematiche ``G. L. Lagrange'', Politecnico di Torino, Corso Duca degli Abruzzi 24, 10129 Torino.}\\
\textit{Email address:} \texttt{filippo.salis@polito.it}

\begin{thebibliography}{99}
\bibitem{al} D. V. Alekseevsky, \emph{ Pseudo-Kähler and para-Kähler symmetric spaces}. Handbook of pseudo-Riemannian geometry and supersymmetry, 703–729, IRMA Lect. Math. Theor. Phys., 16, Eur. Math. Soc., Zürich, 2010.


\bibitem{amt}  D. V.   Alekseevsky, C. Medori, A. Tomassini, 
\emph{Homogeneous para-Kähler Einstein manifolds}. Russ. Math. Surv. 64  (2009) 1-43.

\bibitem{amt2}  D. V.   Alekseevsky, C. Medori, A. Tomassini, 
\emph{Maximally homogeneous para-CR manifolds}. Ann. Global Anal. Geom. 30 (2006), no. 1, 1–27. 




\bibitem{Cal} E. Calabi,  \emph{Isometric Imbedding of Complex Manifolds}, Ann. of Math. \textbf{58} (1953), no. 1, 1--23.


\bibitem{rockym}  V. Cruceanu, P. Fortuny, P. M.  Gadea, 
\emph{A survey on paracomplex geometry}. Rocky Mountain J. Math. 26 (1996) 83-115.


\bibitem{gm}  P. M. Gadea, A. Montesinos Amilibia,
\emph{Spaces of constant para-holomorphic sectional curvature}. Pacific J. Math. 136 (1989) 85-101.
 
\bibitem{hl}  F. R. Harvey, H. B. Lawson, 
\emph{Split special Lagrangian geometry} in Metric and differential geometry.
Progr. Math. 297 (2012)  43-89.


\bibitem{loizedda} A. Loi, M. Zedda \emph{K\"ahler immersions of  K\"ahler manifolds into complex space forms}, Lecture notes of the Unione Matematica Italiana \textbf{23}, Springer (2018).

\bibitem{ms1} G. Manno, F. Salis, 
\emph{2-dimensional Kähler-Einstein metrics induced by finite dimensional
complex projective spaces}. New York J. Math. 28 (2022) 420–432.

\bibitem{ms2} G. Manno, F. Salis, 
\emph{$\mathbb{T}^n$-invariant Kähler-Einstein manifolds immersed in
complex projective spaces}, preprint, arXiv: 2407.12685.

\bibitem{warner} F. W. Warner, \emph{Foundations of Differentiable Manifolds and Lie Groups}, GTM volume 94, Springer New York, NY (1983).

%\bibitem{w}  K. Wolsson,  
%\emph{Linear dependence of a function set of $m$ variables with vanishing generalized Wronskians}. Linear Algebra Appl. 117 (1989) 73-80.
 

\end{thebibliography}
\end{document}